\documentclass{article}
\usepackage{graphicx} 
\usepackage{amsmath, amssymb, amsthm}
\usepackage{graphicx}
\usepackage{bm}
\usepackage{upgreek}
\usepackage{enumitem} 
\usepackage{hyperref}
\hypersetup{colorlinks,linkcolor={blue},citecolor={red},urlcolor={blue}}
\usepackage{cleveref}
\usepackage{mathtools}
\usepackage{enumitem}

\newtheorem*{assumptionA*}{\textbf{Assumptions}\hspace{-3pt}}
\Crefname{assumptionA}{\textbf{Assumptions}\hspace{-3pt}}{\textbf{H}\hspace{-3pt}}
\crefname{assumptionA}{\textbf{Assumptions}}{\textbf{A}}

\newtheorem{thm}{Theorem}

\newtheorem{lemma}{Lemma}
\newtheorem{corollary}{Corollary}
\newtheorem{prop}{Proposition}

\newtheorem{note}{Note}
\newtheorem{rem}{Remark}
\usepackage[letterpaper,top=2cm,bottom=2cm,left=4cm,right=4cm,marginparwidth=1.75cm]{geometry}

\usepackage{graphicx}
\graphicspath{ {./Diagrams/} }
\usepackage{float}

\title{A Tamed Euler Scheme for SDEs with Non-Locally Integrable Drift Coefficient}
\author{Tim Johnston, Sotirios Sabanis}
\date{April 2024}

\begin{document}

\maketitle
\begin{abstract}
  In this article we show that for SDEs with a drift coefficient that is non-locally integrable, one may define a tamed Euler scheme that converges in $L^p$ at rate $1/2$ to the true solution. The taming is required in this case since one cannot expect the regular Euler scheme to have finite moments in $L^p$. Our novel proof strategy involves controlling the inverse moments of the distance of scheme and the true solution to the singularity set.  We additionally show that our setting applies to the case of two scalar valued particles with singular interaction kernel. To the best of the author's knowledge, this is the first work we are aware of to prove strong convergence of an Euler-type scheme in the case of non-locally integrable drift.
\end{abstract}





\section{Introduction}

In this article we consider SDEs taking values in an open set $D\subset \mathbb{R}^d$ with additive noise, and the performance of the associated Euler scheme. That is, for drift coefficient $b:D\to\mathbb{R}^d$ we consider
\begin{equation}\label{eq: SDE main}
    dX_t=b(X_t)dt+dW_t,\;\;\;t\geq 0,\;\;\;X_0=x_0\in D,
\end{equation}
and the Euler scheme approximation
\begin{equation}\label{eq: Euler Scheme}
    dX^n_t=b(X_{\kappa_n(t)})dt+dW_t,\;\;\;t\geq 0,\;\;\;X^n_0=x_0\in D, 
\end{equation}
where $\kappa_n(t):=\lfloor nt\rfloor/n$ is the projection backwards onto the grid $\{0,1/n,2/n,...\}$. The performance of the Euler scheme has been extensively studied under a wide variety of assumptions on the coefficients. It is well known that the Euler scheme converges in $L^p$ at rate $1/2$ given Lipschitz drift and diffusion coefficient, see \cite{kloeden2013numerical}, and in an almost-sure sense under significantly weaker assumptions, see \cite{Gyongy1998}. However the Euler scheme fails to converge at all in $L^p$ (and in fact diverges) when the coefficients are superlinear, see \cite{articlediv}, and as a result in order to simulate such an SDE explicitly the notion of `tamed' schemes was first introduced in \cite{da2b46b6-84fa-3eed-be6d-20c143c60df1} and developed independently in \cite{articletamed}. For these schemes, one replaces the drift coefficient $b$ in the Euler scheme with functions $b_n$ that depend on the stepsize. For more information on the topic of tamed schemes see \cite{articledif, arnulf2012, ChamanKumar2017, 10.1214/19-AOP1345} and references therein. Furthermore, much research in recent years has focused on the perfomance of the Euler scheme for discontinuous coefficients. In this case, the Euler scheme generally converges well. Indeed, for bounded and measurable drift coefficient and uniformly elliptic diffusion coefficient the Euler scheme converges in $L^p$ at rate $1/2-$, see \cite{10.1214/22-AAP1867}. Similar results have been shown in \cite{articlemg, MullerGronbach2022, 10.1093/imanum/draa078, 10.1214/20-EJP479}. For a survey of this fast moving area of research one could consult \cite{MULLERGRONBACH2024101870}.

In this article we consider the performance of the Euler scheme for another class of non-Lipschitz drift coefficients, namely coefficients that are locally unbounded, in particular non-locally integrable. In \cite{ojm/1396966224, ZHANG2006447, Ding2021} strong convergence of the Euler scheme in $L^p$ at rate $1/2$ is proven for drifts with logarithmic, and therefore integrable, poles. These results therefore match the convergence results achieved in this article for drifts with more severe poles. The weak convergence of a similar scheme to the one considered in this article was considered in the case where the drift coefficient obeyed a certain intergrability property in \cite{10.1214/23-AAP2006}, however the necessity of such a property ruled out applicability to many SDEs of interest. Additionally, in \cite{journel2024weak}, a similar scheme was considered for the underdamped Langevin dynamics for Lennard-Jones interactions, that is, for a second order SDE whose drift coefficient is the gradient of a very singular scalar potential, and weak convergence was proven. Furthermore, in \cite{1b199059eaf9494c842eec278d3e7aac} a tamed Euler scheme similar to those discussed previously is shown to converge in probability under a setting similar to that considered in this article, however no rate is presented. In this article we present another tamed scheme, and show that one can obtain strong convergence at rate $1/2$ even when the drift coefficient $b$ is not-locally integrable, given certain conditions that ensure the true solution is `pushed' away from points where the drift is singular. Key to our proofs is a certain stopping time argument which means we do not have to consider what happens when either the scheme or the true solution get too close to `bad' points. A stopped scheme is used under different conditions to approximate the exit time of a stochastic process in \cite{GOBET2000167}, and the related idea of `discarding' bad trajectories is shown to ensure weak convergence up to some $\epsilon>0$ in a variety of non-Lipschitz settings in \cite{5aa5837d-f9a1-33da-b667-726b0d620667}. However, this is the first paper we are aware of to study strong convergence of an Euler-type scheme for non-locally integrable coefficients. Strong convergence of numerical methods is important for obtaining bounds for sampling algorithms based on discretisations of SDEs in Wasserstein distance, see \cite{DALALYAN20195278, 3b99fee3596546ae809bc62b56a361be} and many others, as well as for multi-level Monte Carlo methods, see \cite{Giles_2015}.

For the study of SDEs with singular drift, it is important to show that the original continuous time dynamics are well-posed, that is, the SDE has a unique strong solution. If this is not the case then one cannot expect to construct a solution to the dynamics from the driving noise alone, and as a result one cannot expect any numerical method to converge strongly. In \cite{1b199059eaf9494c842eec278d3e7aac} strong solutions are constructed given more general conditions than those presented in this article, using directly an Euler-type scheme as pioneered in the classic articles \cite{113194bd7a444bad9e31624261c18d4b, Gyongy1998}. The literature on existence and uniqueness of strong solutions for SDEs with singular drift is vast, see for instance \cite{ZHANG20051805, Gyongy2001, rockner2021sdescriticaltimedependent}, as well as the book-length study \cite{cherny2004singular}. In this article we choose to prove existence and uniqueness directly, since our argument relates fundamentally to the key strategy of the paper, which is to control the inverse moments of the distance of the scheme and the true solution to the singular points of the drift. We observe that the results of \cite{1b199059eaf9494c842eec278d3e7aac} could be applied to this end by choosing an appropriate Lyapunov function $V$, for instance $V=\rho^{-2}$, for $\rho$ as given in the assumptions below.

The primary motivation for the study of such singular drift coefficients comes from particle dynamics, where one considers particle systems with strong repulsive forces at small distance, for instance the famous Lennard-Jones dynamics, wherein particles at distance of $r$ away from one another repulse each other with force $\sim r^{-12}-r^{-6}$. In Corollary \ref{cor: interacting particles} we show that the scheme presented is this article converges strongly to the true solution of this dynamics in the `overdamped case' where friction does not play a role, for two scalar valued particles. In future we would like to refine this restriction to more general classes of interacting particle systems with singular interaction.

One cannot expect the Euler scheme to converge for coefficients that are not locally integrable. Indeed, if $b$ is not in $L^1_{loc}(D)$ then there exists an open set $D'\subset D$ with compact closure such that $\int_{D'}\lvert b \rvert =\infty$. Therefore for any Gaussian random variable $Z$ that is absolutely continuous with respect to the Lebesgue density one has $E\lvert b(Z)\rvert=\infty$, due to the lower bounded on the density of a Gaussian on any compact set. It can easily be deduced from this that \eqref{eq: Euler Scheme} satisfies
\begin{equation}
    E\lvert X^n_t\rvert=\infty,
\end{equation}
for every $t>0$. Therefore, we consider a tamed Euler scheme whose drift coefficient is bounded for every $n\geq 1$. This, in combination with a certain stopping time argument, allows us to demonstrate strong convergence to the true solution. In particular, our argument involves controlling both the regular moments, as well as the inverse moments to the singularity set, of the scheme and the true solution. Furthermore, in Corollary \ref{cor: interacting particles} we demonstrate that the conditions of Theorem \ref{thm: main theorem} are general enough to contain a large class of singular potentials for interacting particle systems, including the famous Lennard-Jones interaction in the case of two scalar particles.

\subsection{Notation}
We conclude this section by introducing the notation we shall use in this article. We use upper subscripts to denote components of elements of $\mathbb{R}^d$, so that $x^j$ for instance denotes the $j$th component of $x\in \mathbb{R}^d$. For $A\subset \mathbb{R}^d$ let $C^r(A)$ denote $r$-times continuously differentiable functions from $A$ to $ \mathbb{R}$, and likewise let $C^r(A, \mathbb{R}^m)$ denote $r$-times continuously differentiable functions from $A$ to $\mathbb{R}^m$. Furthermore, we denote by $C^r_c(X)$ and $C^r_c(X, Y)$ for the subset of the previous function spaces containing functions with compact support. Let $\lvert \cdot \rvert$ denote the Euclidean norm for elements of $\mathbb{R}^d$ as well as the Frobenius norm for matrix elements of $\mathbb{R}^{d \times d}$. For the inner product on $\mathbb{R}^d$ and the Frobenius inner product on $\mathbb{R}^{d\times d}$, we write $\langle x, y \rangle$. For $R>0$ let $B_R \subset \mathbb{R}^d$ denote the ball of radius $R$ around $0$, and for $x\in \mathbb{R}^d$ let $B_R (x)\subset \mathbb{R}^d$ denote the ball of radius $R$ around $x$. For a subset $A \subset \mathbb{R}^d$ define $dist(x, A):= \inf \{ \lvert x- y \rvert  \; \vert y \in A \}$, the boundary of $A$ as $\partial A$, the closure $\bar{A}$ and the interior $int(A)$. Finally if $A,B \subset \mathbb{R}^d$, we define the sum $A+B:=\{a+b \; \vert a\in A, b\in B\}$.

\section{Assumptions and Main Theorem}\label{sec: assump and main thm}

Our assumptions and main result are given below. We consider the case where $b$ and other key functions given by the singularity set obey an `inverse polynomial Lipschitz condition', meaning the Lipschitz constant can blow up as a negative power of the distance to the singularity set. We additionally assume that the drift coefficient `pushes the solution away' from the points where $b$ blows up. In the case of two interacting particle systems with singular interactions $S$ is the set of points for which two particles meet, as demonstrated in Corollary \ref{cor: interacting particles}.
\begin{assumptionA*}
There exists an open set $D\subset \mathbb{R}^d$ and a measurable set $S \subset \mathbb{R}^d$ such that the initial condition $x_0$ of \eqref{eq: SDE main} satisfies $x_0\in D$, and additionally $D\cap S=\emptyset$ and $\partial D\subset S$. We assume the function $\rho:D\to[0,\infty)$, given by
\begin{equation}\label{eq: rho defn}
    \rho(x):=dist(x,S),
\end{equation}
is twice continuously differentiable on $D$, that is, $\rho\in C^2(D)$. Finally $b:\mathbb{R}^d\to\mathbb{R}^d$ is a measurable function for which there exists $c,\alpha,l, h_1, h_2, h_3, h_4>0$ and $1<\beta<\alpha$ such that for every $x, y\in D$
\begin{equation}\label{assmp: monotonicity}
    \langle b(x)-b(y), x-y \rangle\leq c\lvert x-y \rvert^2,
\end{equation}
\begin{equation}\label{assmp: rho pushes away}
    \langle \nabla \rho(x), b(x)\rangle \geq h_1 \rho(x)^{-\alpha}-h_2\rho(x)^{-\beta}-h_3-h_4\rho(x),
\end{equation}
\begin{equation}\label{assmp: second derivative rho}
     \lvert \nabla ^2\rho(x)\rvert\leq c\biggr (1+ \rho(x)^{-\beta}\biggr),
\end{equation}
\begin{equation}\label{assmp: inverse poly Lipschitz b}
      \lvert  b(x)- b(y)\rvert \leq c\biggr (1+ \rho(x)^{-l}+ \rho(y)^{-l}\biggr)\lvert x-y \rvert.
\end{equation}
\end{assumptionA*}
   The key assumption is \eqref{assmp: rho pushes away}, which demonstrates that the dynamics push the true solution away from the singular points of the drift, which lie on $S$. Actually \eqref{assmp: rho pushes away} furthermore implies that for every sequence $x_n \in D$ such that $x_n\to s \in S$, one has $b(x_n)\to \infty$. 
\begin{rem}
    For a simple case obeying the above assumptions, for any $\alpha>1$ one could take $d=1$, $D=(0,\infty)$, $b(x)=x^{-\alpha}$, $S=\{0\}$, $\rho(x)=x$, $\beta=0$ and $l=\alpha+1$. In particular, since the derivatives of $\rho$ are constant, this setting simplifies certain calculations. One observes that \eqref{assmp: monotonicity} follows since $b$ is decreasing, and \eqref{assmp: inverse poly Lipschitz b} follows by the mean value theorem.
\end{rem}

    The monotonicity assumption \eqref{assmp: monotonicity} as well as assumption \eqref{assmp: rho pushes away} mean that our assumptions do not cover interacting particles with singular interaction kernels in higher dimensions, nor with more than two particles. In Corollary \ref{cor: interacting particles} we show that our assumptions do indeed cover Lennard-Jones interactions for two particles in the scalar valued case where $d=1$. In future work we would like to lessen these assumptions so as to allow for more general examples.

\begin{thm}\label{thm: main theorem}
    Let the assumptions presented in this section hold for \eqref{eq: SDE main}. Let us fix $\delta>0$ and $w\in [0,\frac{1}{3l}]$. For $\rho$ as given in \eqref{eq: rho defn} and for all $n \in \mathbb{N}$, let us define the truncated coefficients $b_n:\mathbb{R}^d\to\mathbb{R}^d$ as
    \begin{equation}
        b_n(x)=b(x)1_{\{x\in D\} \cap\{\rho(x)\geq \delta n^{-w}\}}.
    \end{equation}
   Then if one defines $\kappa_n(t):=\lfloor nt\rfloor /n$ to be the projection backwards onto $\{0,1/n,2/n,...\}$, one has that \eqref{eq: SDE main} has a unique strong solution $(X_t)_{t\geq 0}$ and 
   \begin{equation}
       dX^n_t=b_n(X^n_{\kappa_n(t)})+dW_t
   \end{equation}
   is uniquely well defined for all $t\geq 0$. Furthermore, for any $T>0$ and $p>0$ there exists $c>0$ independent of $n\geq 1$ such that
    \begin{equation}\label{eq: conclusion Theorem 1}
        E\sup_{t\in [0,T]}\lvert X^n_t-X_t\rvert^p\leq cn^{-p/2}.
    \end{equation}
\end{thm}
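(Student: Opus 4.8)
The plan is to combine a stopping-time localisation with the classical monotone Euler analysis, the whole argument resting on uniform-in-$n$ control of the inverse moments $E\,\rho(X^n_{t})^{-q}$ and $E\,\rho(X_t)^{-q}$ for every $q>0$. Well-posedness is the easy part: since the noise is additive and $b_n$ is bounded, the scheme is given explicitly on each interval $[k/n,(k{+}1)/n)$ by $X^n_t=X^n_{k/n}+b_n(X^n_{k/n})(t-k/n)+(W_t-W_{k/n})$, hence is uniquely well defined; for \eqref{eq: SDE main}, pathwise uniqueness among $D$-valued solutions follows from \eqref{assmp: monotonicity} and Gronwall, and existence I would obtain by solving the auxiliary SDEs with bounded measurable drifts $b^{(m)}:=b\,1_{\{x\in D\}\cap\{\rho(x)\ge 1/m\}}$ (classical, for additive noise), noting these solutions agree up to $\sigma_m:=\inf\{t:\rho(\cdot)<1/m\}$, gluing, and checking $\sigma_m\uparrow\infty$ a.s.\ — which is exactly the content of the inverse moment bound below applied to the approximating processes.

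\emph{Inverse moments (the heart of the proof).} For $q>0$ put $f=\rho^{-q}$ on $D$, so that $\nabla f=-q\rho^{-q-1}\nabla\rho$ and $\nabla^2 f=-q\rho^{-q-1}\nabla^2\rho+q(q{+}1)\rho^{-q-2}\nabla\rho\otimes\nabla\rho$. Applying It\^o to $f(X_t)$ and using $|\nabla\rho|\le1$ together with \eqref{assmp: rho pushes away} and \eqref{assmp: second derivative rho}, the finite-variation part of $f(X_t)$ is bounded above by
\[
-qh_1\,\rho(X_t)^{-q-1-\alpha}+C\big(\rho(X_t)^{-q-1-\beta}+\rho(X_t)^{-q-2}+\rho(X_t)^{-q-1}+\rho(X_t)^{-q}+1\big),
\]
and since $\alpha>1$ and $\alpha>\beta$ the negative term dominates all the others once $\rho(X_t)$ is small; the drift is therefore $\le C(1+\rho(X_t)^{-q})$, and a localisation–Gronwall argument plus BDG for the martingale part gives $E\sup_{t\le T}\rho(X_t)^{-q}<\infty$ for all $q>0$. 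For the scheme I would run the analogous computation for $f(X^n_{t\wedge\tau_n})$ with $\tau_n:=\inf\{t:\rho(X^n_t)\le\delta n^{-w}\}$: before $\tau_n$ one has $b_n(X^n_{\kappa_n(t)})=b(X^n_{\kappa_n(t)})$, so the drift of $f(X^n_t)$ is $-q\rho(X^n_t)^{-q-1}\langle\nabla\rho(X^n_t),b(X^n_{\kappa_n(t)})\rangle+\tfrac12\Delta f(X^n_t)$; one then replaces $\langle\nabla\rho(X^n_t),b(X^n_{\kappa_n(t)})\rangle$ by $\langle\nabla\rho(X^n_{\kappa_n(t)}),b(X^n_{\kappa_n(t)})\rangle$ plus an error controlled via \eqref{assmp: second derivative rho}, \eqref{assmp: inverse poly Lipschitz b} and the one-step displacement $|X^n_t-X^n_{\kappa_n(t)}|\lesssim n^{-1}|b(X^n_{\kappa_n(t)})|+|W_t-W_{\kappa_n(t)}|$. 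The restriction $w\le\tfrac1{3l}$ is precisely what keeps the drift part of this displacement, of order $n^{wl-1}$, negligible next to the distance $\ge\delta n^{-w}$ to $S$, so that the leading negative term survives; one concludes $\sup_n E\sup_{t\le T}\rho(X^n_{t\wedge\tau_n})^{-q}<\infty$ for every $q>0$, and (using \eqref{assmp: monotonicity} against a fixed reference point) $\sup_n E\sup_{t\le T}|X^n_{t\wedge\tau_n}|^p<\infty$.

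\emph{Euler error and removal of the stopping.} Because the noise is additive, $e_t:=X^n_t-X_t$ has no martingale part: $\tfrac{d}{dt}|e_t|^p=p|e_t|^{p-2}\langle e_t,\,b_n(X^n_{\kappa_n(t)})-b(X_t)\rangle$ for $p\ge2$ (general $p$ then follows by Jensen). For $t<\tau_n$ write $b_n(X^n_{\kappa_n(t)})-b(X_t)=[b(X^n_t)-b(X_t)]+[b(X^n_{\kappa_n(t)})-b(X^n_t)]$; \eqref{assmp: monotonicity} handles the first bracket and Young's inequality the second, giving $\tfrac{d}{dt}|e_t|^p\le C|e_t|^p+|b(X^n_{\kappa_n(t)})-b(X^n_t)|^p$, and by \eqref{assmp: inverse poly Lipschitz b}, H\"older, the displacement bound and the inverse moment bounds one gets $E[\,|b(X^n_{\kappa_n(t)})-b(X^n_t)|^p\,1_{t<\tau_n}]\le Cn^{-p/2}$ uniformly in $n$ and $t\le T$. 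Gronwall then yields $E\sup_{t\le T}|e_{t\wedge\tau_n}|^p\le Cn^{-p/2}$. Splitting $E\sup_{t\le T}|e_t|^p$ over $\{\tau_n>T\}$ and $\{\tau_n\le T\}$: the first piece equals $E\sup_{t\le T}|e_{t\wedge\tau_n}|^p\le Cn^{-p/2}$; on the second, Cauchy–Schwarz gives $\big(E\sup_{t\le T}|e_t|^{2p}\big)^{1/2}P(\tau_n\le T)^{1/2}$, where $E\sup_{t\le T}|e_t|^{2p}\le C(1+n^{2pwl})$ from the crude bound $\|b_n\|_\infty\le Cn^{wl}$, while $P(\tau_n\le T)\le Cn^{-wq}$ for every $q>0$ by Markov's inequality and the scheme's inverse moment bound (on $\{\tau_n\le T\}$ one has $\rho(X^n_{\tau_n})=\delta n^{-w}$). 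Choosing $q$ large enough that $pwl-\tfrac12wq\le-\tfrac p2$ makes this piece $\le Cn^{-p/2}$ as well, which is \eqref{eq: conclusion Theorem 1}.

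The crux — and the step I expect to be most delicate — is the inverse moment bound for the scheme: pushing the Lyapunov/It\^o estimate through in spite of the drift being both frozen at $\kappa_n(t)$ and truncated, i.e.\ showing that a single Euler step cannot overcome the outward push of \eqref{assmp: rho pushes away}. Once those uniform bounds are secured, the remaining steps are the standard monotone Euler argument together with a routine good-event/bad-event decomposition.
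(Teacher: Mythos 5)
Your overall architecture is the same as the paper's: localise at $\tau^n=\inf\{t:\rho(X^n_t)\le\delta n^{-w}\}$, prove inverse moments of $\rho$ for both the true solution and the stopped scheme via an It\^o/Lyapunov computation for $\rho^{-q}$, run the monotone Euler estimate up to $\tau^n$, and remove the stopping by Markov's inequality plus Cauchy--Schwarz against crude moments that grow only polynomially in $n$. However, in the one step you yourself single out as the crux --- the inverse-moment bound for the scheme --- your proposed implementation has a genuine gap. You suggest replacing $\langle\nabla\rho(X^n_t),b(X^n_{\kappa_n(t)})\rangle$ by $\langle\nabla\rho(X^n_{\kappa_n(t)}),b(X^n_{\kappa_n(t)})\rangle$. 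After that swap the only negative term available is $-q\,h_1\,\rho(X^n_t)^{-q-1}\rho(X^n_{\kappa_n(t)})^{-\alpha}$, with the singularity split across two time points, whereas the positive singular terms you must absorb (the It\^o corrections of order $\rho(X^n_t)^{-q-1-\beta}$ and $\rho(X^n_t)^{-q-2}$, and the $h_2$-term) are evaluated at time $t$; dominating them would require $\rho(X^n_{\kappa_n(t)})\lesssim\rho(X^n_t)$, which fails pathwise since the Wiener increment is unbounded, and controlling $\nabla\rho(X^n_t)-\nabla\rho(X^n_{\kappa_n(t)})$ through \eqref{assmp: second derivative rho} requires the segment joining the two points to stay in $D$ (not guaranteed) and introduces further singular factors. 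The paper instead swaps the argument of $b$, writing $\langle\nabla\rho(X^n_s),b(X^n_{\kappa_n(s)})\rangle=\langle\nabla\rho(X^n_s),b(X^n_s)\rangle+\langle\nabla\rho(X^n_s),b(X^n_{\kappa_n(s)})-b(X^n_s)\rangle$, so that every singular term sits at time $s$ and the polynomial domination \eqref{eq: bound on polynomial} applies; the cross term is split by Young's inequality into a $\rho(X^n_s)^{-q-2-\beta}$ piece (absorbed by the good term) plus a power of $|b(X^n_{\kappa_n(s)})-b(X^n_s)|$, whose expectation is of order $n^{q(3lw-1)}+1$ by \eqref{assmp: inverse poly Lipschitz b}, the threshold $\rho\ge\delta n^{-w}$, and the a priori bound of Lemma \ref{lemma: bad regular moment bounds scheme}. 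That is exactly where $w\le\tfrac1{3l}$ enters (three factors of $n^{lw}$ against one $n^{-1}$), not through your ``displacement versus distance to $S$'' heuristic, which would instead suggest the different condition $w<1/(l+1)$.

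A secondary inaccuracy: the claim $\|b_n\|_\infty\le Cn^{wl}$ is false in general, since \eqref{eq: upper bound for b} carries the factor $(1+|x|)$ (think of a confining linear part of the drift), so $b_n$ is unbounded and has linear growth with constant of order $n^{lw}$; a naive Gronwall with that Lipschitz-type constant would even give bounds exponential in $n^{lw}$. The crude unstopped estimate $E\sup_{t\le T}|X^n_t|^{2p}\le C(1+n^{2pwl})$ therefore has to be obtained as in Lemma \ref{lemma: bad regular moment bounds scheme}: use the one-sided bound $\langle x,b(x)\rangle\le c(1+|b(x)|+|x|^2)$ coming from \eqref{assmp: monotonicity}, so that only low powers of $|b_n|$ appear and the $n^{lw}$ factors enter polynomially. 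Note also that this crude bound must come \emph{before} the scheme's inverse-moment lemma, since it is needed there to take expectations of the freezing error. The remaining steps of your proposal --- the stopped error estimate via monotonicity and the $n^{-1/2}$ increment bound, and the good-event/bad-event decomposition with $P(\tau^n\le T)\le cn^{-wq}$ (including the term $X_{t\wedge\tau^n}-X_t$ handled with the continuous-time moments) --- coincide with the paper's proof.
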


\section{Continuous Time Solution}
In this section we prove that, given the assumptions in Section \ref{sec: assump and main thm}, there exists a unique strong solution of \eqref{eq: SDE main} that stays in $D$ almost surely. We begin by showing \eqref{eq: SDE main} has a unique strong solution up to some stopping time, and in Lemma \ref{lemma: true solution cont for all time} refine this to show \eqref{eq: SDE main} has a unique strong solution for all time. For our calculations we use a local version of It\^{o}'s formula presented in the appendix. Henceforth, we assume that the assumptions given in Section \ref{sec: assump and main thm} hold.

\begin{note}
    The constant $c>0$ appearing in the following Lemmas may depend on $T>0$ but never on $n\geq 1$, $\epsilon>0$ or any time parameters $t,s, u \geq 0$. 
\end{note}

\begin{lemma}\label{lemma: local definition continuous time}
 For any $\epsilon>0$ there exists a stopping time $\tau_\epsilon$ such that \eqref{eq: SDE main} has a unique strong $X_t$ solution on $[0,\tau_\epsilon]$, and furthermore
   \begin{equation}\label{eq: tau_epsilon definition}
       \tau_\epsilon=\inf\{t\geq 0\vert \; \rho(X_t)\leq \epsilon  \}.
   \end{equation}
\end{lemma}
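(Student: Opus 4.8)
The goal is to construct a unique strong solution up to the first time $\rho(X_t)$ hits $\epsilon$. The natural strategy is to modify the drift away from the singularity set so that the modified SDE has standard well-posedness, solve that, and then observe that the solution of the modified equation coincides with a solution of the original one up to $\tau_\epsilon$. Concretely, for fixed $\epsilon > 0$ I would pick a smooth cutoff and define $b^\epsilon : \mathbb{R}^d \to \mathbb{R}^d$ by setting $b^\epsilon(x) = b(x)$ on $\{x \in D : \rho(x) \geq \epsilon\}$ and extending it to a globally defined function that is bounded and, ideally, still one-sided Lipschitz. The point of assumption \eqref{assmp: inverse poly Lipschitz b} is that on the region $\{\rho \geq \epsilon\}$ the local Lipschitz constant of $b$ is bounded by $c(1 + 2\epsilon^{-l})$, so $b$ restricted there is genuinely Lipschitz; combined with the monotonicity \eqref{assmp: monotonicity}, one can produce a global extension $b^\epsilon$ that is Lipschitz continuous (or at least continuous, bounded, and one-sided Lipschitz), e.g.\ by composing with a smooth truncation of $\rho$ and using a Lipschitz retraction onto the closed set $\{\rho \geq \epsilon\} \cap \bar D$. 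For such $b^\epsilon$, the SDE $dY_t = b^\epsilon(Y_t)\,dt + dW_t$ has a unique strong solution $(Y_t)_{t\geq 0}$ by the classical Itô theory.

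Next I would define $\tau_\epsilon := \inf\{t \geq 0 : \rho(Y_t) \leq \epsilon\}$ (using the convention $\inf \emptyset = \infty$), which is a stopping time because $t \mapsto \rho(Y_t)$ is continuous (here $\rho \in C^2(D)$ and $Y$ has continuous paths, with the caveat that we must check $Y$ stays in $D$ on $[0,\tau_\epsilon)$ — this follows since on that interval $\rho(Y_t) > \epsilon > 0$ keeps $Y_t$ bounded away from $S \supset \partial D$, so $Y_t \in D$). On the stochastic interval $[0,\tau_\epsilon]$ we have $\rho(Y_t) \geq \epsilon$, hence $b^\epsilon(Y_t) = b(Y_t)$, so $Y_t$ solves \eqref{eq: SDE main} on $[0,\tau_\epsilon]$; set $X_t := Y_t$ there. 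Uniqueness on $[0,\tau_\epsilon]$: if $\tilde X$ is any strong solution of \eqref{eq: SDE main} on $[0,\tau_\epsilon]$ staying in $D$, then along it $b(\tilde X_t) = b^\epsilon(\tilde X_t)$ up to the first time $\rho(\tilde X)$ reaches $\epsilon$, so $\tilde X$ solves the modified equation up to that time; a Gronwall / one-sided-Lipschitz argument (using \eqref{assmp: monotonicity} or Lipschitzness of $b^\epsilon$) forces $\tilde X = Y$ up to the minimum of the two hitting times, and then the two hitting times coincide, giving \eqref{eq: tau_epsilon definition}.

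The main obstacle I anticipate is the construction of the global extension $b^\epsilon$ that simultaneously (i) agrees with $b$ on $\{\rho \geq \epsilon\}\cap D$, (ii) is bounded, and (iii) retains enough regularity (Lipschitz, or continuous plus one-sided Lipschitz) for a clean well-posedness citation. Boundedness and local Lipschitzness on the relevant region are immediate from \eqref{assmp: inverse poly Lipschitz b}, but gluing across $\partial(\{\rho \geq \epsilon\}\cap D)$ while preserving a global monotonicity-type bound requires some care — one likely composes $b$ with a Lipschitz nearest-point-type map onto $\{\rho\ge\epsilon\}$ and checks that \eqref{assmp: monotonicity} survives the composition, or simply observes that a bounded Lipschitz extension always exists (Kirszbraun/McShane) and that uniqueness can then be obtained from Lipschitzness alone without invoking \eqref{assmp: monotonicity}. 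A secondary technical point is verifying measurability/adaptedness and that $\tau_\epsilon$ is genuinely a stopping time with respect to the augmented Brownian filtration, which is routine given path continuity of $\rho(Y_\cdot)$. Once these are in place the lemma follows; the finer statement that the solution extends to all of $[0,\infty)$ staying in $D$ is deferred to Lemma \ref{lemma: true solution cont for all time}, which will send $\epsilon \downarrow 0$ and use \eqref{assmp: rho pushes away} to show $\tau_\epsilon \to \infty$.
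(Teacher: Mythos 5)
Your proposal is correct, but it takes a genuinely different route from the paper. The paper does not construct a regular extension of $b$: it simply sets $\bar b_\epsilon := b\,1_{K_\epsilon}$ with $K_\epsilon=\{x\in D:\ \rho(x)>\epsilon\}$, notes via \eqref{eq: upper bound for b} that this (discontinuous) drift has linear growth, and invokes Theorem 1.1 of \cite{ZHANG20051805} to get a unique strong solution of the truncated SDE on $[0,\infty)$; stopping that solution at the exit time of $K_\epsilon$ then gives the solution of \eqref{eq: SDE main} on $[0,\tau_\epsilon]$ and the identification \eqref{eq: tau_epsilon definition}, exactly as in your final step. You instead exploit that \eqref{assmp: inverse poly Lipschitz b} makes $b$ genuinely Lipschitz, with constant $c(1+2\epsilon^{-l})$, on the set $\{x\in D:\rho(x)\ge\epsilon\}$ (which is closed in $\mathbb{R}^d$ precisely because $\partial D\subset S$), extend it globally by Kirszbraun/McShane, and apply classical It\^{o} theory. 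Your route is more elementary and self-contained (no appeal to well-posedness theory for singular or merely measurable drift), at the cost of the extension step; the paper's route is shorter but leans on the citation, and has the incidental benefit that its truncation coincides with the tamed coefficient, $\bar b_\epsilon=b_n$ for $\epsilon=\delta n^{-w}$. Three small corrections to your write-up: the extension need not (and in general cannot) be bounded, since $b$ may be unbounded on $\{\rho\ge\epsilon\}$ (e.g.\ a linear confining term) --- but boundedness is unnecessary, as global Lipschitzness gives linear growth; the nearest-point retraction variant should be avoided, since $\{\rho\ge\epsilon\}$ need not be convex and such retractions need not be Lipschitz, so the Kirszbraun/McShane alternative you mention is the one to use; and the one-sided Lipschitz property \eqref{assmp: monotonicity} need not survive the extension, but, as you observe, uniqueness follows from Lipschitzness alone. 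Your stopping-time and uniqueness arguments otherwise match the paper's.
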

\begin{proof}
    Fix $y_0\in D$. Since $D\subset\mathbb{R}^d$ is an open set, there exists an $a>0$ for which $B_a(y)\subset D$, and so by the definition of $\rho$ in \eqref{eq: rho defn} one has $\rho(y_0)>a>0$. Therefore by \eqref{assmp: inverse poly Lipschitz b}, for all $x\in D$ one may obtain the following bound
    \begin{align}\label{eq: upper bound for b}
       \lvert b(x)\rvert &\leq \lvert b(y_0)\rvert+\lvert b(x)-b(y_0)\rvert \nonumber \\
       &\leq c(1+\rho(x)^{-l})(1+\lvert x \rvert),
    \end{align}
    where the constant $c>0$ is independent of $x\in D$. Now let us define
    \begin{align}\label{eq: K epsilon defn}
    K_\epsilon&:=\{x\in D\vert\; \rho(x)> \epsilon  \},
    \end{align}
   so that we may define the truncated coefficients $b_\epsilon:\mathbb{R}^d\to\mathbb{R}^d$ by
    \begin{equation}
   \bar{b}_\epsilon(x):=b(x)1_{  K_\epsilon}.
    \end{equation}
   In particular, it follows that by setting $\epsilon = \delta n^{-w}$ one has $\bar{b}_\epsilon=b_n$. Furthermore, by \eqref{eq: upper bound for b} one observes that there exists a $c_\epsilon >0$ independent of $x$ for which $ \lvert\bar{b}_\epsilon(x)\rvert\leq c_\epsilon (1+\lvert x \rvert)$. Therefore, using Theorem 1.1 in \cite{ZHANG20051805} one may conclude that the SDE
  \begin{equation}\label{eq: epsilon truncated SDE}
    d\bar{X}^\epsilon_t=b_\epsilon(\bar{X}^\epsilon_t)dt+dW_t,\;\;\; \bar{X}^\epsilon_0=x_0,
  \end{equation}
  has a unique strong solution on $[0,\infty)$. Here $x_0\in D$ is the same initial condition as \eqref{eq: SDE main}. Let us furthermore define the stopping time  $\tau_\epsilon$ by
     \begin{equation}
       \tau_\epsilon:=\inf\{t\geq 0\vert \; \bar{X}^\epsilon_t\not \in K_\epsilon\}.
   \end{equation}
One observes then that 
\begin{equation}
d\bar{X}^\epsilon_t=b(\bar{X}^\epsilon_t)dt+dW_t,\;\;\;t\in [0,\tau_\epsilon) ,
\end{equation}
and therefore $X_t:=\bar{X}^\epsilon_t$ is a strong solution of \eqref{eq: SDE main} on $[0,\tau_\epsilon]$. Furthermore uniqueness follows since if this solution were non-unique on $[0,\tau_\epsilon]$ one could find another solution of \eqref{eq: epsilon truncated SDE} on $[0,\tau_\epsilon]$. Finally \eqref{eq: tau_epsilon definition} follows trivially since $X_t=\bar{X}^\epsilon_t$ on $[0,\tau_\epsilon]$.
\end{proof}
\begin{lemma}\label{lemma: strong cont moment bounds}
  Let $\tau_\epsilon$ be as in Lemma \ref{lemma: local definition continuous time}. Then for every $p>0$ and $T>0$ one has 
       \begin{align}\label{eq: strong cont bound}
    \sup_{\epsilon>0} E\sup_{t\in [0,T]}\rho(X_{t\wedge \tau_\epsilon})^{-p}<\infty.
    \end{align}
\end{lemma}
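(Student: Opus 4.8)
The plan is to apply the local It\^o formula from the appendix to the function $x\mapsto\rho(x)^{-p}$ along the stopped process $X_{\cdot\wedge\tau_\epsilon}$. This is justified because on $[0,\tau_\epsilon]$ one has $\rho(X_s)\geq\epsilon>0$, so the trajectory stays in the region $\{x\in D:\rho(x)\geq\epsilon\}$, on which $\rho^{-p}$ is $C^2$ (recall $\rho\in C^2(D)$) and on which $b$ is bounded by \eqref{eq: upper bound for b}. Writing $Y^\epsilon_t:=\rho(X_{t\wedge\tau_\epsilon})^{-p}$, It\^o's formula gives
\[
Y^\epsilon_t=\rho(x_0)^{-p}+\int_0^{t\wedge\tau_\epsilon}G(X_s)\,ds+N^\epsilon_t,
\]
where $N^\epsilon_t:=-p\int_0^{t\wedge\tau_\epsilon}\rho(X_s)^{-p-1}\langle\nabla\rho(X_s),dW_s\rangle$ and
\[
G(x):=-p\rho(x)^{-p-1}\langle\nabla\rho(x),b(x)\rangle+\tfrac12 p(p+1)\rho(x)^{-p-2}\lvert\nabla\rho(x)\rvert^2-\tfrac12 p\rho(x)^{-p-1}\Delta\rho(x).
\]

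The main step is a Lyapunov-type bound on $G$. Since $\rho=dist(\cdot,S)$ is $1$-Lipschitz, $\lvert\nabla\rho\rvert\leq1$ on $D$, and since $\lvert\Delta\rho\rvert\leq\sqrt{d}\,\lvert\nabla^2\rho\rvert$, assumption \eqref{assmp: second derivative rho} gives $\lvert\Delta\rho(x)\rvert\leq c\sqrt{d}(1+\rho(x)^{-\beta})$. Feeding these together with the pushing-away bound \eqref{assmp: rho pushes away} into $G$, one finds that $G(x)$ is bounded above by $-ph_1\rho(x)^{-p-1-\alpha}$ plus positive constants times $\rho(x)^{-j}$ for $j\in\{p+1+\beta,\ p+2,\ p+1,\ p\}$. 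Because $1<\beta<\alpha$, each of these exponents is strictly less than $p+1+\alpha$ — the case $j=p+2$ using exactly $\alpha>1$ — so the scalar function $r\mapsto-ph_1r^{-p-1-\alpha}+(\text{those terms})$ tends to $-\infty$ as $r\downarrow0$ and to $0$ as $r\to\infty$, hence is bounded above. Thus there is a constant $c>0$ independent of $\epsilon$ with
\[
G(x)\leq c-\tfrac12 ph_1\rho(x)^{-p-1-\alpha},\qquad x\in D.
\]

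I would use this twice. First, on $[0,\tau_\epsilon]$ the integrand of $N^\epsilon$ is bounded by $p\epsilon^{-p-1}$, so $N^\epsilon$ is a true martingale on $[0,T]$; taking expectations in the It\^o identity at $t=T$, using the above bound on $G$ and $E\,\rho(X_{T\wedge\tau_\epsilon})^{-p}\geq0$, one obtains uniformly in $\epsilon>0$
\[
E\int_0^{T\wedge\tau_\epsilon}\rho(X_s)^{-p-1-\alpha}\,ds\leq\frac{2}{ph_1}\bigl(\rho(x_0)^{-p}+cT\bigr).
\]
Since $p$ is arbitrary here and $\rho^{-m}\leq1+\rho^{-q-1-\alpha}$ whenever $0<m\leq q+1+\alpha$, it follows that $\sup_{\epsilon>0}E\int_0^{T\wedge\tau_\epsilon}\rho(X_s)^{-m}\,ds<\infty$ for every $m>0$. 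Second, from the It\^o identity and $G\leq c$,
\[
\sup_{t\in[0,T]}\rho(X_{t\wedge\tau_\epsilon})^{-p}\leq\rho(x_0)^{-p}+cT+\sup_{t\in[0,T]}\lvert N^\epsilon_t\rvert,
\]
and by the Burkholder--Davis--Gundy inequality followed by Jensen's inequality,
\[
E\sup_{t\in[0,T]}\lvert N^\epsilon_t\rvert\leq c\,E\Bigl(\int_0^{T\wedge\tau_\epsilon}\rho(X_s)^{-2p-2}\,ds\Bigr)^{1/2}\leq c\Bigl(E\int_0^{T\wedge\tau_\epsilon}\rho(X_s)^{-2p-2}\,ds\Bigr)^{1/2}.
\]
Applying the integrated bound with $m=2p+2$ makes the right-hand side finite uniformly in $\epsilon$, which yields \eqref{eq: strong cont bound}.

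The step I expect to be the real obstacle is the Lyapunov inequality for $G$: one must check that the single negative term $-ph_1\rho^{-p-1-\alpha}$ coming from \eqref{assmp: rho pushes away} dominates \emph{all} the positive contributions, in particular the curvature term $\tfrac12 p(p+1)\rho^{-p-2}$ and the Hessian term controlled by \eqref{assmp: second derivative rho} — this is precisely where $\alpha>1$ and $\beta<\alpha$ enter. Everything else (the martingale property on the stopped interval, BDG, Jensen, and passing from integrability of $\rho^{-p-1-\alpha}$ to that of $\rho^{-m}$ for all $m>0$) is routine, provided one retains the negative drift through the expectation estimate so that all constants stay independent of $\epsilon$.
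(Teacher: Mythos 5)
Your proposal is correct, and it follows the paper's skeleton (local It\^o formula applied to $\rho^{-p}$ of the stopped process, $\lvert\nabla\rho\rvert\le 1$ from the $1$-Lipschitz property, assumptions \eqref{assmp: rho pushes away} and \eqref{assmp: second derivative rho} to control the drift, BDG for the supremum), but it closes the argument differently. The paper bounds the polynomial expression in $\rho^{-1}$ by a constant \emph{except} for a retained term $c_5\rho^{-p}$, applies Gr\"onwall to get $\sup_{\epsilon}\sup_{t\le T}E\rho(X_{t\wedge\tau_\epsilon})^{-p}<\infty$ for every $p$, and then squares the It\^o inequality, takes suprema, and uses BDG plus a second Gr\"onwall, feeding in the pointwise bound at order $2p+2$. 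You instead keep the dissipative term $-\tfrac12 ph_1\rho^{-p-1-\alpha}$ in the Lyapunov estimate, take expectations once to obtain a time-integrated inverse-moment bound $\sup_\epsilon E\int_0^{T\wedge\tau_\epsilon}\rho(X_s)^{-m}\,ds<\infty$ for all $m>0$, and then get the supremum bound directly from $G\le c$ together with first-moment BDG and Jensen applied with $m=2p+2$; no Gr\"onwall is needed and no pointwise-in-time moment bound is ever established. Both routes hinge on the same key fact you correctly isolate — that $\alpha>1$ and $\beta<\alpha$ make the exponent $p+1+\alpha$ strictly dominate all positive contributions, including the curvature term of order $p+2$ — and your verification of the stopped martingale property and of the constants' independence of $\epsilon$ matches what the paper needs. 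Your variant is arguably a little leaner (one application of expectation, exponent-one BDG instead of squaring), at the cost of not producing the intermediate pointwise moment estimate, which the paper states but does not actually need elsewhere; so the argument is complete as written.
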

\begin{proof}
    Let $X_t$ be the solution to \eqref{eq: SDE main} on $[0,\tau_\epsilon]$ which is given in Lemma \ref{lemma: local definition continuous time}. Then by Proposition \ref{prop: local Ito} (see Appendix) one may write
    \begin{align}\label{eq: cont time negative moments}
    \rho(X_{t\wedge \tau_\epsilon})^{-p}&=\rho(x_0)^{-p}-p\int^{t\wedge \tau_\epsilon}_0\rho(X_s)^{-p-1}\langle \nabla \rho(X_s), b(X_s) \rangle ds\nonumber \\
    &-\frac{p}{2} \int^{t\wedge \tau_\epsilon}_0 \rho(X_s)^{-p-1}tr(\nabla ^2\rho(X_s))ds \nonumber \\
    &+\frac{p(p+1)}{2}\int^{t\wedge \tau_\epsilon}_0\rho(X_s)^{-p-2}tr(\nabla \rho(X_s)\otimes \nabla \rho(X_s))   ds\nonumber \\
    &-p\int^{t\wedge \tau_\epsilon}_0\rho(X_s)^{-p-1}\langle \nabla \rho(X_s),dW_s \rangle .
    \end{align}
    Now observe that for every $x,y\in D$ one has
    \begin{equation}\label{eq: Lipschitz calc}
        \rho(x)=\inf\{\lvert x-z\rvert \vert z\in S\}\leq \inf\{\lvert x-y\rvert +\lvert y-z\rvert\vert z\in S\}=\lvert x-y\rvert+\rho(y),
    \end{equation}
    so $\rho$ is Lipschitz with constant $1$. As a result, since by assumption $\rho\in C^2(D)$, one has $\lvert \nabla \rho \rvert\leq 1$, so by this and \eqref{assmp: rho pushes away} and \eqref{assmp: second derivative rho} one therefore sees that there exists $c_1,c_2, c_3, c_4, c_5>0$ not depending on $\epsilon>0$ or $t>0$ for which
       \begin{align}\label{eq: Ito cont with bound applied}
    \rho(X_{t\wedge \tau_\epsilon})^{-p}&\leq \rho(x_0)^{-p}+\int^{t\wedge \tau_\epsilon}_0(-c_1\rho(X_s)^{-p-1-\alpha}+c_2\rho(X_s)^{-p-1-\beta}) ds\nonumber \\
    &+\int^{t\wedge \tau_\epsilon}_0 (c_3\rho(X_s)^{-p-1} +c_4\rho(X_s)^{-p-2} )ds\nonumber \\
    &+\int^{t\wedge \tau_\epsilon}_0 c_5 \rho(X_s)^{-p} ds-p\int^{t\wedge \tau_\epsilon}_0\rho(X_s)^{-p-1}\langle \nabla \rho(X_s),dW_s \rangle .
    \end{align}
    Now observe that since $1<\beta <\alpha$ one has
    \begin{equation}\label{eq: bound on polynomial}
      \sup_{x\geq 0} (-c_1x^{-p-1-\alpha}+c_2x^{-p-1-\beta}+c_3 x^{-p-1}+c_4 x^{-p-2})<\infty.
    \end{equation}
    since the first term dominates in the limit as $x\to 0$. Therefore there exists a constant $c>0$ independent of $\epsilon>0$ or $t>0$ such that
    \begin{align}\label{eq: Ito cont with bound applied 2}
    \rho(X_{t\wedge \tau_\epsilon})^{-p}&\leq \rho(x_0)^{-p}+ct+\int^{t\wedge \tau_\epsilon}_0 c_5 \rho(X_{s\wedge \tau_\epsilon})^{-p} ds \nonumber \\
    &-p\int^{t\wedge \tau_\epsilon}_0\rho(X_s)^{-p-1}\langle \nabla \rho(X_s),dW_s \rangle .
    \end{align}
    Furthermore, since the integrand of the stochastic integral in \eqref{eq: Ito cont with bound applied 2} is bounded by the definition of $\tau_\epsilon$, it is a true martingale. As a result, by Grönwall's inequality one has for every $p>0$ that
        \begin{align}\label{eq: weak cont rho moment bound}
    \sup_{\epsilon>0}\sup_{t\in [0,T]}E\rho(X_{t\wedge \tau_\epsilon})^{-p}<\infty.
    \end{align}
    Now  to prove strong bounds on $[0,T]$, one may proceed by squaring \eqref{eq: Ito cont with bound applied 2}, taking supremum, and then expectation and the Burkholder-Davis-Gundy inequality, to obtain
          \begin{align}
    E\sup_{u\in [0,t]}&\rho(X_{u\wedge \tau_\epsilon})^{-2p}\leq c(1+t^2)+c\int^{t\wedge \tau_\epsilon}_0 E \rho(X_s)^{-2p} ds \nonumber \\
    &+cE\sup_{s\in [0,t]}\biggr (\int^{s\wedge \tau_\epsilon}_0\rho(X_s)^{-p-1}\langle \nabla \rho(X_s),dW_s \rangle \biggr)^2 \nonumber \\
    &\leq c\biggr(1+t^2+c\int^t_0E\rho(X_{s\wedge \tau_\epsilon})^{-2p-2}ds\biggr).
    \end{align}
Since \eqref{eq: weak cont rho moment bound} holds for all $p>0$, the result follows.
    \end{proof}
\begin{lemma}\label{lemma: true solution cont for all time}
   The SDE \eqref{eq: SDE main} has a unique strong solution on $[0,\infty)$ and
        \begin{equation}\label{eq: cont sol stays in D}
        P(X_t \in D\; \text{for every}\; t\geq 0)=1.
    \end{equation}

\end{lemma}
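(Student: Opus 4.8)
The plan is to upgrade the local existence result of Lemma \ref{lemma: local definition continuous time} to a global one by showing that the stopping times $\tau_\epsilon$ cannot accumulate, i.e.\ $\tau_\epsilon \to \infty$ as $\epsilon \to 0$ almost surely. The crucial input is the uniform bound on the inverse moments of $\rho(X_{\cdot\wedge\tau_\epsilon})$ from Lemma \ref{lemma: strong cont moment bounds}. First I would fix $T>0$ and note that, by definition, on the event $\{\tau_\epsilon \le T\}$ one has $\rho(X_{\tau_\epsilon}) = \epsilon$ by continuity of $t\mapsto \rho(X_t)$ up to $\tau_\epsilon$ (which follows since $\rho\in C^2(D)$ and $X$ is continuous, and $\tau_\epsilon$ is the first hitting time of level $\epsilon$). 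Hence on this event $\sup_{t\in[0,T]}\rho(X_{t\wedge\tau_\epsilon})^{-p} \ge \epsilon^{-p}$, so by Markov's inequality and Lemma \ref{lemma: strong cont moment bounds},
\[
P(\tau_\epsilon \le T) \le \epsilon^{p}\, E\sup_{t\in[0,T]}\rho(X_{t\wedge\tau_\epsilon})^{-p} \le C_{p,T}\,\epsilon^{p}
\]
for every $p>0$. Since the family $\{\tau_\epsilon\}$ is non-increasing in $\epsilon$ (smaller $\epsilon$ means a later exit from the larger set $K_\epsilon$), the limit $\tau_0 := \lim_{\epsilon\to 0}\tau_\epsilon = \sup_{\epsilon>0}\tau_\epsilon$ exists, and $P(\tau_0 \le T) \le \lim_{\epsilon\to 0} P(\tau_\epsilon \le T) = 0$. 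As $T$ was arbitrary, $\tau_0 = \infty$ almost surely.

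Next I would patch together a global solution. For $\epsilon' < \epsilon$ the solutions $X^{(\epsilon)}$ and $X^{(\epsilon')}$ from Lemma \ref{lemma: local definition continuous time} both solve \eqref{eq: SDE main} on $[0,\tau_\epsilon]$, so by the uniqueness asserted there they agree on $[0,\tau_\epsilon]$ almost surely; consistency across a countable sequence $\epsilon_k \downarrow 0$ lets us define $X_t$ unambiguously on $[0,\tau_0) = [0,\infty)$ by setting $X_t := X^{(\epsilon_k)}_t$ for any $k$ with $\tau_{\epsilon_k} > t$. This $X$ is adapted, continuous, and satisfies \eqref{eq: SDE main} on every $[0,\tau_{\epsilon_k}]$, hence on $[0,\infty)$. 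Uniqueness on $[0,\infty)$ follows because any two global strong solutions must, when stopped at $\tau_\epsilon$, coincide with the unique solution on $[0,\tau_\epsilon]$, and $\tau_\epsilon\to\infty$.

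Finally, \eqref{eq: cont sol stays in D} is essentially already in hand: on $[0,\tau_0) = [0,\infty)$ we have $\rho(X_t) \ge \rho(X_{\tau_{\epsilon_k}}) \wedge (\text{something positive})$—more directly, for every $t$ there is $k$ with $\tau_{\epsilon_k} > t$, and on $[0,\tau_{\epsilon_k}]$ the solution lies in $K_{\epsilon_k} \subset D$ by construction, so $X_t \in D$; since this holds for all $t$ on a set of probability one, $P(X_t \in D\ \forall t \ge 0) = 1$. I expect the only real subtlety to be the measure-theoretic bookkeeping in the patching step—ensuring the exceptional null sets from uniqueness and from $\tau_0 = \infty$ are handled along a single countable sequence $\epsilon_k$—together with justifying that $\rho(X_{\tau_\epsilon}) = \epsilon$ on $\{\tau_\epsilon < \infty\}$, which needs continuity of $X$ up to and including $\tau_\epsilon$ and the fact that $X$ cannot jump out of $K_\epsilon$; both are routine given the continuity of $X$ and of $\rho$ on $D$.
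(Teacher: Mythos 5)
Your proposal is correct and follows essentially the same route as the paper: Markov's inequality together with the inverse-moment bound of Lemma \ref{lemma: strong cont moment bounds} gives $P(\tau_\epsilon\le T)\le c\epsilon^q$, the global solution is obtained by letting $\epsilon\to 0$ along the consistent family of stopped solutions, and membership in $D$ follows from the construction. Your extra bookkeeping (monotonicity of $\tau_\epsilon$, patching along a countable sequence) just makes explicit what the paper's definition $X_t=\lim_{\epsilon\to 0}X_{t\wedge\tau_\epsilon}$ leaves implicit.
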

\begin{proof}
First one may use Markov inequality and Lemma \ref{lemma: strong cont moment bounds} to obtain for any $q, \epsilon>0$ that 
    \begin{equation}\label{eq: decay of stopping times}
  P(\inf_{t\in [0,T]}\rho(X_{t\wedge \tau_{\epsilon}})\leq \epsilon)=P(\sup_{t\in [0,T]}\rho(X_{t\wedge \tau_{\epsilon}})^{-q}\geq \epsilon^{-q})\leq c\epsilon^q.
    \end{equation}
Now let us define
\begin{equation}\label{eq: defn of cont process}
    X_t=\lim_{\epsilon\to 0}X_{t\wedge \tau_{\epsilon}}.
\end{equation}
This defines a strong solution to \eqref{eq: SDE main} on $[0,T]$ providing $P(\tau_\epsilon\in [0,T]\;\text{for every}\; \epsilon>0)=0$. To this end one has
\begin{equation}
    P(\tau_\epsilon\in [0,T]\;\text{for every}\; \epsilon>0)\leq \lim_{\epsilon\to 0} P(\tau_\epsilon\in [0,T])=\lim_{\epsilon\to 0}P(\inf_{t\in [0,T]}\rho(X_{t\wedge \tau_{\epsilon}})\leq \epsilon),
\end{equation}
so by \eqref{eq: decay of stopping times}, taking the limit as $\epsilon\to 0$ the strong solution to \eqref{eq: SDE main} on $[0,T]$ follows. To see that this extends to a strong solution on the whole of $[0,\infty)$, note that since we can repeat this construction for any $T>0$, the largest interval $[0,T]$ for which \eqref{eq: SDE main} has a strong solution cannot be finite. 
    \end{proof}
    \begin{lemma}\label{lemma: inverse moment bound cont}
        For every $q>0$ and $T>0$ there exists $c>0$ such that for every $\epsilon>0$ one has
    \begin{equation}\label{eq: epsilon decay cont sol}
        P(\inf_{t\in [0,T]}\rho(X_t)\leq \epsilon)\leq c\epsilon^q.
    \end{equation}
Furthermore, for every $p>0$ one has
    \begin{equation}\label{eq: inverse moments bounds for rho}
       E\sup_{t\in [0,T]}\rho(X_t)^{-p}<\infty.
       \end{equation}
       \end{lemma}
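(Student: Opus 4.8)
The plan is to deduce both claims from the uniform negative-moment estimate of Lemma \ref{lemma: strong cont moment bounds} together with the fact, already established inside the proof of Lemma \ref{lemma: true solution cont for all time}, that $\tau_\epsilon\uparrow\infty$ almost surely as $\epsilon\downarrow 0$ (this is precisely what made the definition \eqref{eq: defn of cont process} legitimate on every $[0,T]$).

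For \eqref{eq: epsilon decay cont sol} the first step is a pathwise identification of events. Since $t\mapsto\rho(X_t)$ is continuous and $[0,T]$ is compact, $\{\inf_{t\in[0,T]}\rho(X_t)\leq\epsilon\}=\{\tau_\epsilon\leq T\}$ for $\tau_\epsilon=\inf\{t\geq0:\rho(X_t)\leq\epsilon\}$ as in Lemma \ref{lemma: local definition continuous time}. Moreover, on $\{\tau_\epsilon\leq T\}$ continuity gives $\rho(X_{\tau_\epsilon})=\epsilon$ while $\rho(X_t)>\epsilon$ for $t<\tau_\epsilon$, whereas on $\{\tau_\epsilon>T\}$ one has $X_{t\wedge\tau_\epsilon}=X_t$ and $\rho(X_t)>\epsilon$ throughout $[0,T]$; hence $\{\inf_{t\in[0,T]}\rho(X_{t\wedge\tau_\epsilon})\leq\epsilon\}=\{\tau_\epsilon\leq T\}$ as well. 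Combining these identifications with \eqref{eq: decay of stopping times} (equivalently, with Markov's inequality applied to $\rho(X_{t\wedge\tau_\epsilon})^{-q}$ and Lemma \ref{lemma: strong cont moment bounds}) yields $P(\inf_{t\in[0,T]}\rho(X_t)\leq\epsilon)=P(\inf_{t\in[0,T]}\rho(X_{t\wedge\tau_\epsilon})\leq\epsilon)\leq c\epsilon^q$, with $c=c(q,T)$, which is exactly \eqref{eq: epsilon decay cont sol}.

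For \eqref{eq: inverse moments bounds for rho} I would pass to the limit $\epsilon\downarrow0$. Because $\tau_\epsilon\uparrow\infty$ almost surely, for a.e.\ $\omega$ there is $\epsilon_0(\omega)>0$ with $\tau_\epsilon>T$ for all $\epsilon<\epsilon_0(\omega)$, so $X_{t\wedge\tau_\epsilon}=X_t$ on $[0,T]$ and therefore $\sup_{t\in[0,T]}\rho(X_{t\wedge\tau_\epsilon})^{-p}\to\sup_{t\in[0,T]}\rho(X_t)^{-p}$ almost surely. Fatou's lemma then gives
\[
E\sup_{t\in[0,T]}\rho(X_t)^{-p}\leq\liminf_{\epsilon\to0}E\sup_{t\in[0,T]}\rho(X_{t\wedge\tau_\epsilon})^{-p}\leq\sup_{\epsilon>0}E\sup_{t\in[0,T]}\rho(X_{t\wedge\tau_\epsilon})^{-p}<\infty
\]
by Lemma \ref{lemma: strong cont moment bounds}. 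As an alternative that uses only the first part of the present lemma, one could invoke the layer-cake formula $E\sup_{t\in[0,T]}\rho(X_t)^{-p}=\int_0^\infty P(\inf_{t\in[0,T]}\rho(X_t)<\lambda^{-1/p})\,d\lambda$, bound the integrand by $1$ on $[0,1]$ and by $c\lambda^{-q/p}$ on $[1,\infty)$ via \eqref{eq: epsilon decay cont sol} with any $q>p$, which makes the tail integrable.

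I do not expect a genuine obstacle here: the statement is essentially bookkeeping layered on Lemmas \ref{lemma: strong cont moment bounds} and \ref{lemma: true solution cont for all time}. The only points needing a little care are the event identifications in the first part — which rely on pathwise continuity of $\rho(X_\cdot)$ and on $\rho(X_{\tau_\epsilon})=\epsilon$ when $\tau_\epsilon<\infty$ — and the a.s.\ convergence $X_{t\wedge\tau_\epsilon}\to X_t$ uniformly on $[0,T]$, which is immediate once $\tau_\epsilon\to\infty$ a.s.
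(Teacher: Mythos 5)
Your argument is correct and follows essentially the same route as the paper: the first bound comes from identifying $\{\inf_{t\in[0,T]}\rho(X_t)\leq\epsilon\}$ with $\{\inf_{t\in[0,T]}\rho(X_{t\wedge\tau_\epsilon})\leq\epsilon\}$ by continuity and invoking \eqref{eq: decay of stopping times}, and the second from the a.s.\ convergence $X_{t\wedge\tau_\epsilon}\to X_t$ underlying \eqref{eq: defn of cont process} together with Lemma \ref{lemma: strong cont moment bounds} and Fatou's lemma. The extra layer-cake alternative is a harmless aside; no gaps.
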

       \begin{proof}
           By Lemma \ref{lemma: true solution cont for all time} we know that there exists a strong solution of \eqref{eq: SDE main} on the whole of $[0,T]$, so one may observe that by continuity
    \begin{equation}
        P(\inf_{t\in [0,T]}\rho(X_t)\leq \epsilon)= P(\inf_{t\in [0,T]}\rho(X_{t\wedge \tau_{\epsilon}})\leq \epsilon),
    \end{equation}
    so \eqref{eq: epsilon decay cont sol} follows from \eqref{eq: decay of stopping times}. Furthermore, one sees that the second result follows from \eqref{eq: defn of cont process}, Lemma \ref{lemma: strong cont moment bounds} and Fatou's Lemma.
       \end{proof}
    \begin{lemma}\label{lemma: cont regular moments}
  For every $p>0$ and $T>0$ one has
           \begin{equation}\label{eq: moments bounds for cont sol}
       E\sup_{t\in [0,T]}\lvert X_t\rvert^p< \infty.
    \end{equation}
    \end{lemma}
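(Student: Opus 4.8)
The plan is to exploit the monotonicity assumption \eqref{assmp: monotonicity} against a fixed interior reference point, following the localisation-then-limit structure already used in Lemma \ref{lemma: strong cont moment bounds}. Fix $y_0\in D$ and set $V(x):=(1+\lvert x-y_0\rvert^2)^{p/2}$, which lies in $C^2(\mathbb{R}^d)$ for every $p>0$ and satisfies $\lvert x\rvert^p\leq cV(x)$. The key pointwise input is a drift bound valid for $x\in D$: splitting off $y_0$ and using \eqref{assmp: monotonicity} together with Young's inequality,
\begin{equation}
\langle x-y_0,b(x)\rangle=\langle b(x)-b(y_0),x-y_0\rangle+\langle b(y_0),x-y_0\rangle\leq c'\bigl(1+\lvert x-y_0\rvert^2\bigr),
\end{equation}
where $c'$ depends only on $c$ and $\lvert b(y_0)\rvert$. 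Together with $\nabla V(x)=p(1+\lvert x-y_0\rvert^2)^{p/2-1}(x-y_0)$ and the elementary bound $\lvert\nabla^2 V(x)\rvert\leq c_p(1+\lvert x-y_0\rvert^2)^{p/2-1}$, this yields the drift--Hessian estimate $\langle\nabla V(x),b(x)\rangle+\tfrac12 tr\bigl(\nabla^2 V(x)\bigr)\leq CV(x)$ for all $x\in D$, with $C$ independent of $\epsilon$. Crucially the singular behaviour of $b$ never enters, because $b$ is only ever compared with its value at the fixed interior point $y_0$.

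I would then run a two-stage localisation. With $\tau_\epsilon$ as in Lemma \ref{lemma: local definition continuous time} and $\sigma_R:=\inf\{t\geq0:\lvert X_t\rvert\geq R\}$, apply the local It\^{o} formula (Proposition \ref{prop: local Ito}) to $V(X_{t\wedge\tau_\epsilon\wedge\sigma_R})$; this is legitimate since $X_s\in D$ for $s\leq\tau_\epsilon$. On $[0,t\wedge\tau_\epsilon\wedge\sigma_R]$ the stochastic integrand is bounded, so it vanishes in expectation, and the drift--Hessian estimate plus Gr\"onwall's inequality give $EV(X_{t\wedge\tau_\epsilon\wedge\sigma_R})\leq V(x_0)e^{CT}$, a bound independent of $R$ and $\epsilon$. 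For the pathwise supremum I would, exactly as in Lemma \ref{lemma: strong cont moment bounds}, take the supremum over $u\in[0,t]$ in the It\^{o} expansion before taking expectations, apply Burkholder--Davis--Gundy to the martingale term, bound its quadratic variation by $p^2\int_0^t(1+\lvert X_{s\wedge\tau_\epsilon\wedge\sigma_R}-y_0\rvert^2)^{p-1}\,ds$, and close the estimate by Young's inequality (absorbing a small multiple of $E\sup_{s\leq T}V(X_{s\wedge\tau_\epsilon\wedge\sigma_R})$, which is finite thanks to the $\sigma_R$-localisation, and controlling the remaining time integral via the Gr\"onwall bound). This produces $E\sup_{t\in[0,T]}V(X_{t\wedge\tau_\epsilon\wedge\sigma_R})\leq C'$ with $C'$ independent of $R$ and $\epsilon$.

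Finally I would pass to the limit: letting $R\to\infty$ with Fatou's lemma removes the $\sigma_R$-localisation, and then letting $\epsilon\to0$, using that $X_{t\wedge\tau_\epsilon}\to X_t$ uniformly on $[0,T]$ for a.e.\ $\omega$ by Lemma \ref{lemma: true solution cont for all time}, one more application of Fatou's lemma gives $E\sup_{t\in[0,T]}V(X_t)<\infty$, which is \eqref{eq: moments bounds for cont sol} since $\lvert x\rvert^p\leq cV(x)$. I expect the main obstacle to be purely organisational --- keeping track of the double localisation and the Young absorption for the supremum bound --- rather than conceptual, since this is the standard monotonicity moment estimate adapted to a singular drift via the fixed reference point.
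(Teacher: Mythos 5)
Your proof is correct, but it takes a genuinely different route from the paper's. The paper works with $\lvert x\rvert^p$ and the decomposition $\langle x, b(x)\rangle = \langle x-y_0, b(x)-b(y_0)\rangle+\langle y_0, b(x)-b(y_0)\rangle+\langle x, b(y_0)\rangle$, which leaves a residual $\lvert b(x)\rvert$ term; this is then controlled through \eqref{eq: b upper bound 2} by $\rho(x)^{-2l}$, so the paper must invoke the inverse moment bounds of Lemma \ref{lemma: inverse moment bound cont} before closing the Gr\"onwall/BDG/Fatou argument with the single localisation $\tilde{\tau}_R$. By instead centring the Lyapunov function at the reference point, $V(x)=(1+\lvert x-y_0\rvert^2)^{p/2}$, your drift term only ever sees $\langle x-y_0,b(x)\rangle\leq c'(1+\lvert x-y_0\rvert^2)$, so the singular part of $b$ and hence the inverse moments never enter: the moment bound becomes a consequence of \eqref{assmp: monotonicity} and finiteness of $b$ at one interior point alone, and it covers all $p>0$ in one stroke (the paper's use of It\^{o} on $\lvert x\rvert^p$ implicitly needs $p\geq 2$ first). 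The price is the extra $\tau_\epsilon$-localisation and the second limit $\epsilon\to 0$, which the paper avoids by applying It\^{o} directly to the global solution of Lemma \ref{lemma: true solution cont for all time}; your double localisation is harmless and if anything makes the application of the local It\^{o} formula within $D$ cleaner. The supremum step (BDG, splitting the quadratic variation as $V\cdot(1+\lvert x-y_0\rvert^2)^{p/2-1}$ and absorbing $E\sup V$ by Young, finite under the $\sigma_R$-stopping) and the two Fatou passages are sound, so no gap — just a more self-contained argument that decouples this lemma from Lemma \ref{lemma: inverse moment bound cont}.
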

    \begin{proof}
    Let us define
    \begin{equation}
        \tilde{\tau}_R:=\inf\{t\geq 0 \vert \; \lvert X_t\rvert \geq R \}
    \end{equation}
By Itô's formula there exists a constant $c>0$ independent of $R>0$ and $t>0$ such that
    \begin{align}
  \lvert X_{t\wedge \tilde{\tau}_R} \rvert^p &\leq \lvert x_0\rvert^p+\int^{t\wedge \tilde{\tau}_R}_0 (p \lvert X_s\rvert^{p-2}\langle X_s, b(X_s)\rangle+c\lvert X_s\rvert^{p-2})ds\nonumber \\
  &+\int^{t\wedge \tilde{\tau}_R}_0p \lvert X_s\rvert^{p-2}\langle X_s, dW_s\rangle.
    \end{align}
    Now observe that, fixing arbitrary $y_0\in D$, one has by \eqref{assmp: monotonicity} and Young's inequality that there exists a constant $c>0$ such that for every $x\in D$
     \begin{align}\label{eq: b bound using mon}
     \langle x, b(x)\rangle & = \langle x-y_0, b(x)-b(y_0)\rangle+\langle y_0, b(x)-b(y_0)\rangle+\langle x, b(y_0)\rangle \nonumber \\
     &\leq c(1+\lvert  b(x)\rvert +\lvert x \rvert^2).
    \end{align}
Applying this bound and squaring, one obtains for every $t\in [0,T]$ that
    \begin{align}
  \lvert X_{t\wedge \tilde{\tau}_R} \rvert^{2p}\leq c\lvert &x_0\rvert^{2p}+c\int^{t\wedge \tilde{\tau}_R}_0 \lvert X_s\rvert^{2p-4}(1+\lvert  b(X_s)\rvert^2 +\lvert X_s \rvert^4)ds\nonumber \\
  &+c\biggr (\int^{t\wedge \tilde{\tau}_R}_0p \lvert X_s\rvert^{p-2}\langle X_s, dW_s\rangle\biggr)^2.
    \end{align}
     Furthermore, using \eqref{eq: upper bound for b} there exists another constant $c>0$ such that for all $x\in D$
    \begin{equation}\label{eq: b upper bound 2}
        \lvert  b(x)\rvert\leq c(1+ \rho(x)^{-2l}+\lvert x\rvert^2),
    \end{equation}
    and therefore
  \begin{align}
  \lvert X_{t\wedge \tilde{\tau}_R} \rvert^{2p}&\leq c\lvert x_0\rvert^{2p}+c\int^{t\wedge \tilde{\tau}_R}_0 \biggr(1+\lvert X_s\rvert^{2p} +\lvert X_s\rvert^{2p-4}\rho(X_s)^{-4l}\biggr)ds\nonumber \\
  &+c\biggr (\int^{t\wedge \tilde{\tau}_R}_0p \lvert X_s\rvert^{p-2}\langle X_s, dW_s\rangle\biggr)^2\nonumber \\
  &\leq c\lvert x_0\rvert^{2p}+c\int^{t\wedge \tilde{\tau}_R}_0 (1+\sup_{u\in [0,s]}\lvert X_{u\wedge \tilde{\tau}_R}\rvert^{2p} +\sup_{u\in [0,s]}\rho(X_{u\wedge \tilde{\tau}_R})^{-2pl} )ds\nonumber \\
  &+c\sup_{u\in [0,t]}\biggr (\int^{u\wedge \tilde{\tau}_R}_0p \lvert X_s\rvert^{p-2}\langle X_s, dW_s\rangle\biggr)^2.
    \end{align}
    Therefore, taking supremum over the LHS, applying Lemma \ref{lemma: inverse moment bound cont}, the Burkholder-Davis-Gundy Inequality and Young's inequality, one obtains
      \begin{align}
 E\sup_{u\in [0,t]} \lvert X_{u\wedge \tilde{\tau}_R} \rvert^{2p}\leq c\lvert x_0\rvert^{2p}+c\int^t_0 (1+E\sup_{u\in [0,s]}\lvert X_{u\wedge \tilde{\tau}_R}\rvert^{2p} )ds.
    \end{align}
    Now observe that since 
    \begin{equation}
    E\sup_{u\in [0,t]} \lvert X_{u\wedge \tilde{\tau}_R} \rvert^{2p}<\infty,
    \end{equation}
    by the definition of $\tilde{\tau}_R$, one may use Grönwall's inequality to conclude that there exists a constant independent of $R,T>0$ such that
          \begin{align}
 E\sup_{u\in [0,T]} \lvert X_{u\wedge \tilde{\tau}_R} \rvert^{2p}\leq c.
    \end{align}
    Then by continuity 
    \begin{equation}
    \lim_{R\to\infty}\sup_{u\in [0,t]} \lvert X_{u\wedge \tilde{\tau}_R}\rvert=\sup_{u\in [0,t]} \lvert X_u \rvert,
    \end{equation}
    almost surely, so that the result follows from Fatou's lemma.
\end{proof}
\section{Tamed Euler Scheme}
In this section we aim to replicate the result of the previous section for the tamed Euler scheme introduced in Theorem \ref{thm: main theorem}. Owing to the discretisation structure of the scheme, we shall begin with the following apriori moment bound for $X^n_t$, which we upgrade in Lemma \ref{lemma: regular moment bounds scheme good} to uniform moment bounds up to a certain stopping time. In particular we define
\begin{equation}\label{eq: tau n defn}
    \tau^n:=\inf\{ t\geq 0 \vert \; \rho(X^n_t)\leq \delta n^{-w}\},
\end{equation}
so that it immediately follows that
\begin{equation}\label{eq: rho bound}
\rho(X^n_{t\wedge \tau^n})\geq \rho( x_0) \wedge \delta n^{-w},
\end{equation}
for all $t\geq 0$. Furthermore, by the definition of $b_n$ as given in Theorem \ref{thm: main theorem}, one has for all $t\geq 0$ that
\begin{equation}\label{eq: b_n equal to b before stopping}
    b_n(X^n_{t\wedge \tau^n})1_{t\in [0,\tau^n)}=b(X^n_{t\wedge \tau^n})1_{t\in [0,\tau^n)}.
\end{equation}
As before we assume that the assumptions given in Section \ref{sec: assump and main thm} hold. 
\begin{lemma}\label{lemma: bad regular moment bounds scheme}
For every $p>0$ and $T>0$ there exists a $c>0$ such that for every $n\geq 1$
\begin{equation}
  E \sup_{s\in [0,T]}\lvert X^n_s \rvert^{p}\leq c(1+n^{lwp}).
\end{equation}
\end{lemma}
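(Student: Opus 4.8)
The plan is to run a Gr\"onwall argument on $E\sup_{s\in[0,t]}\lvert X^n_s\rvert^p$ using It\^o's formula, but controlling the singular drift contribution crudely via the deterministic lower bound \eqref{eq: rho bound}. First I would apply It\^o's formula to $\lvert X^n_t\rvert^{2p}$ (working with an even power and a localising stopping time $\tilde\tau_R$ as in Lemma \ref{lemma: cont regular moments} to make the local martingale a true martingale). The drift term produces $\langle X^n_s, b_n(X^n_{\kappa_n(s)})\rangle$, which unlike the continuous case is \emph{not} of the form $\langle X^n_s, b(X^n_s)\rangle$, so the monotonicity bound \eqref{eq: b bound using mon} does not apply directly. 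Instead I would simply bound $\lvert b_n(x)\rvert$ for all $x$ by a constant depending on $n$: on the truncation set $\{\rho(x)\ge\delta n^{-w}\}$, the upper bound \eqref{eq: b upper bound 2} gives $\lvert b_n(x)\rvert\le c(1+\rho(x)^{-2l}+\lvert x\rvert^2)$, and since $b_n$ vanishes off this set we may replace $\rho(x)^{-2l}$ by $(\delta n^{-w})^{-2l}=c\,n^{2lw}$, obtaining the global bound $\lvert b_n(x)\rvert\le c(n^{2lw}+\lvert x\rvert^2)$ uniformly in $x$. Actually, since we only need the moment of $X^n$ itself and $b_n$ is evaluated at the grid point, the cleaner route is to bound $\lvert X^n_{t\wedge\tilde\tau_R}\rvert \le \lvert x_0\rvert + \int_0^{t}\lvert b_n(X^n_{\kappa_n(s)})\rvert ds + \lvert W_{t\wedge\tilde\tau_R}\rvert$ directly.

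The key step is then the self-bounding structure: $\lvert b_n(X^n_{\kappa_n(s)})\rvert \le c\bigl(n^{2lw} + \lvert X^n_{\kappa_n(s)}\rvert^2\bigr) \le c\bigl(n^{2lw}+\sup_{u\in[0,s]}\lvert X^n_{u\wedge\tilde\tau_R}\rvert^2\bigr)$, where I use $\kappa_n(s)\le s$. Hence, after raising to the power $p$ (say $p\ge 1$; small $p$ follows by Jensen/H\"older from the $p=1$ case, or one just works with $2p$), taking supremum over $[0,t]$, taking expectations, and using the Burkholder--Davis--Gundy inequality on the Brownian term, one arrives at
\begin{equation}
E\sup_{u\in[0,t]}\lvert X^n_{u\wedge\tilde\tau_R}\rvert^{p}\le c\bigl(1+n^{lwp}\bigr)+c\int_0^t E\sup_{u\in[0,s]}\lvert X^n_{u\wedge\tilde\tau_R}\rvert^{p}\,ds,
\end{equation}
where the $n^{lwp}$ term comes from $(n^{2lw})^{p/2}$ after Young's inequality splits the square inside the time integral. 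Since $E\sup_{u\in[0,t]}\lvert X^n_{u\wedge\tilde\tau_R}\rvert^{p}<\infty$ by definition of $\tilde\tau_R$, Gr\"onwall's inequality yields a bound $c(1+n^{lwp})$ uniform in $R$ and in $t\in[0,T]$; letting $R\to\infty$ and applying Fatou's lemma gives the claim.

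The main obstacle — really the only subtlety — is that $b_n$ is evaluated at the delayed grid point $X^n_{\kappa_n(s)}$ rather than at $X^n_s$, so one cannot close the Gr\"onwall loop using a monotonicity-type estimate as in the continuous-time Lemma \ref{lemma: cont regular moments}; this is precisely why one pays the price of the $n$-dependent constant $n^{lwp}$ and must instead rely on the brute-force bound $\lvert b_n\rvert \le c(n^{2lw}+\lvert\cdot\rvert^2)$ valid on the truncation set. The quadratic growth $\lvert\cdot\rvert^2$ in this bound is harmless under Gr\"onwall because, after squaring and integrating in time, $\lvert X^n_{\kappa_n(s)}\rvert^4 \le \sup_{u\le s}\lvert X^n_{u\wedge\tilde\tau_R}\rvert^{4}$ still sits under the integral sign against $ds$ — so one should actually run the argument at exponent $2p$ (as written above with $p\rightsquigarrow 2p$) to absorb the square cleanly, exactly as done in Lemma \ref{lemma: cont regular moments}. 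Everything else is routine: BDG, Young, and Gr\"onwall.
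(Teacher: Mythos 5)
There is a genuine gap: your Gr\"onwall loop does not close. Your ``self-bounding'' step uses $\lvert b_n(X^n_{\kappa_n(s)})\rvert\le c\bigl(n^{2lw}+\sup_{u\le s}\lvert X^n_{u\wedge\tilde\tau_R}\rvert^2\bigr)$, so after raising to the power $p$ the drift contribution puts $E\sup_{u\le s}\lvert X^n_{u\wedge\tilde\tau_R}\rvert^{2p}$ under the time integral, not $E\sup_{u\le s}\lvert X^n_{u\wedge\tilde\tau_R}\rvert^{p}$ as in your displayed inequality: the left-hand side is a $p$-th moment while the right-hand side is a $2p$-th moment, and running the argument at exponent $2p$ only shifts the mismatch to $4p$ — no fixed exponent closes the hierarchy. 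Your appeal to Lemma \ref{lemma: cont regular moments} misreads the mechanism there: the quadratic growth of $\lvert b\rvert$ is absorbed in that proof not by doubling the exponent, but because It\^o's formula supplies the prefactor $\lvert X_s\rvert^{p-2}$ and the inner product $\langle X_s,b(X_s)\rangle$ is first reduced via the monotonicity bound \eqref{eq: b bound using mon} to $c(1+\lvert b(X_s)\rvert+\lvert X_s\rvert^2)$, so that $b$ enters only through $\lvert b\rvert^{p/2}$ relative to $\lvert X\rvert^p$ and its quadratic part recombines to exactly level $2p$ after squaring. Your ``cleaner route'' $\lvert X^n_t\rvert\le\lvert x_0\rvert+\int_0^t\lvert b_n\rvert\,ds+\lvert W_t\rvert$ discards precisely this structure. (Switching to the linear-growth bound \eqref{eq: upper bound for b}, i.e.\ $\lvert b_n(x)\rvert\le c(1+n^{lw})(1+\lvert x\rvert)$, does close a Gr\"onwall loop, but with a constant growing exponentially in $n^{lw}$, far worse than the claimed polynomial bound $c(1+n^{lwp})$ with $c$ independent of $n$; that precise polynomial rate is what Lemma \ref{lemma: strong scheme inverse moment bounds} later relies on through $lw\le 1/3$. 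Also, in your route the drift enters at first power, so even formally the inhomogeneity would be $(n^{2lw})^{p}$, not $(n^{2lw})^{p/2}$.)

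The missing idea is the paper's splitting \eqref{eq: splitting for proof}: in the It\^o drift term write $\langle X^n_s,b_n(X^n_{\kappa_n(s)})\rangle=\langle X^n_{\kappa_n(s)},b_n(X^n_{\kappa_n(s)})\rangle+\langle X^n_s-X^n_{\kappa_n(s)},b_n(X^n_{\kappa_n(s)})\rangle$. For the first piece the monotonicity-derived bound \eqref{eq: b bound using mon} applies, since both arguments are the grid point, and Young's inequality gives $c(1+\lvert X^n_s\rvert^p+\lvert X^n_{\kappa_n(s)}\rvert^p+\lvert b_n(X^n_{\kappa_n(s)})\rvert^{p/2})$; only then is the growth bound \eqref{eq: b upper bound 2} with \eqref{eq: rho bound} applied to $\lvert b_n\rvert^{p/2}$ (after squaring, $\lvert b_n\rvert^{p}$), which stays at level $2p$ and produces the inhomogeneity $n^{2lwp}$, i.e.\ the stated rate for the exponent $2p$. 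For the second piece one uses the one-step structure $X^n_s-X^n_{\kappa_n(s)}=b_n(X^n_{\kappa_n(s)})(s-\kappa_n(s))+(W_s-W_{\kappa_n(s)})$ with $s-\kappa_n(s)\le n^{-1}$, together with the linear-growth bound \eqref{eq: upper bound for b} and \eqref{eq: rho bound}; the factors $n^{-1}$ and $E\lvert W_s-W_{\kappa_n(s)}\rvert^p\sim n^{-p/2}$ offset the powers $n^{2lw}$ and $n^{lw}$ because $lw\le 1/3$. Without this decomposition your argument cannot reach the claimed bound.
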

\begin{proof}
    Now let us follow the strategy of the proof of Lemma \ref{lemma: cont regular moments}, and define the stopping time $\tilde{\tau}_R^n$ by
        \begin{equation}
     \tilde{\tau}_R^n:=\inf\{t\geq 0\vert \; \lvert X^n_t\rvert\geq R\}. 
    \end{equation} 
    Then for $p\geq 2$ one calculates using Itô's formula and \eqref{eq: b_n equal to b before stopping} that there exists $c>0$ such that
     \begin{align}\label{eq: Ito for stopped}
  \lvert X^n_{t\wedge \tilde{\tau}_R^n} \rvert^{p}\leq \lvert x_0\rvert^p&+\int^{t \wedge \tilde{\tau}_R^n}_0 (p\lvert X^n_s\rvert^{p-2}\langle X^n_s, b_n(X^n_{\kappa_n(s)}\rangle +c\lvert X^n_s\rvert^{p-2})ds \nonumber \\
  &+\int^{t \wedge \tilde{\tau}_R^n}_0 p \lvert X^n_s\rvert^{p-2}\langle X_s, dW_s\rangle ds.
    \end{align}
    For the first term we bound as
    \begin{align}\label{eq: splitting for proof}
        \lvert X^n_s\rvert^{p-2} \langle X^n_s, b_n(X^n_{\kappa_n(s)}) \rangle &\leq    \lvert X^n_s\rvert^{p-2}\langle X^n_{\kappa_n(s)}, b_n(X^n_{\kappa_n(s)})\rangle\nonumber \\
        &+ \lvert X^n_s\rvert^{p-2}\langle X^n_s-X^n_{\kappa_n(s)}, b_n(X^n_{\kappa_n(s)})\rangle
    \end{align}
    To bound the first term in the splitting, one applies the definition of $b_n$ as well as \eqref{eq: b bound using mon} and Young's inequality to obtain
   \begin{align}\label{eq: splitting 1}
     &\lvert X^n_s\rvert^{p-2}\langle X^n_{\kappa_n(s)}, b_n(X^n_{\kappa_n(s)})\rangle \nonumber \\
     & =  \lvert X^n_s\rvert^{p-2}\langle X^n_{\kappa_n(s)}, b(X^n_{\kappa_n(s)})\rangle 1_{\{ X^n_{\kappa_n(s)}\in D\} \cap\{\rho(X^n_{\kappa_n(s)})\geq \delta n^{-w}\}} \nonumber \\
     &\leq c(1+  \lvert X^n_s\rvert^p+ \lvert X^n_{\kappa_n(s)}\rvert^p)\nonumber \\
     &+  \lvert b(X^n_{\kappa_n(s)})\rvert^{p/2}1_{\{ X^n_{\kappa_n(s)}\in D\} \cap\{\rho(X^n_{\kappa_n(s)})\geq \delta n^{-w}\}}  \nonumber \\
     &\leq c(1+  \lvert X^n_s\rvert^p+ \lvert X^n_{\kappa_n(s)}\rvert^p+\lvert b_n(X^n_{\kappa_n(s)})\rvert^{p/2}).
   \end{align}
   Similarly, for the second term on the RHS of \eqref{eq: splitting for proof}, one may bound the increment to obtain
   \begin{align}
       \lvert X^n_s\rvert^{p-2}\langle X^n_s&-X^n_{\kappa_n(s)}, b_n(X^n_{\kappa_n(s)})\rangle\nonumber \\
       &\leq c\lvert X^n_s\rvert^{p-2}n^{-1}\lvert b(X^n_{\kappa_n(s)}) \rvert^2 1_{\{ X^n_{\kappa_n(s)}\in D\} \cap\{\rho(X^n_{\kappa_n(s)})\geq \delta n^{-w}\}} \nonumber \\
       &+c\lvert X^n_s\rvert^{p-2}\lvert b(X^n_{\kappa_n(s)}) \rvert \lvert  W_s-W_{\kappa_n(s)}\rvert 1_{\{ X^n_{\kappa_n(s)}\in D\} \cap\{\rho(X^n_{\kappa_n(s)})\geq \delta n^{-w}\}}.
   \end{align}
Therefore, applying \eqref{eq: upper bound for b} and \eqref{eq: rho bound} one has
\begin{align}
       &\lvert X^n_s\rvert^{p-2}\langle X^n_s-X^n_{\kappa_n(s)}, b_n(X^n_{\kappa_n(s)})\rangle\nonumber \\
       &\leq cn^{-1}\lvert X^n_s\rvert^{p-2}(1+n^{2lw})(1+\lvert X^n_{\kappa_n(s)}\rvert^2)\nonumber \\
       &+c\lvert X^n_s\rvert^{p-2}(1+n^{lw})(1+\lvert X^n_{\kappa_n(s)}\rvert)\lvert  W_s-W_{\kappa_n(s)}\rvert,
   \end{align}
   and since  $lw\leq 1/2$ by assumption, one may apply \eqref{eq: rho bound} and Young's inequality to obtain
\begin{align}\label{eq: splitting 2}
       &\lvert X^n_{s}\rvert^{p-2}\langle X^n_{s}-X^n_{\kappa_n(s)}, b(X^n_{\kappa_n(s)})\rangle\nonumber \\
       &\leq c(1+\lvert X^n_{s}\rvert^p+\lvert X^n_{\kappa_n(s)}\rvert^p+n^{p/2}\lvert  W_s-W_{\kappa_n(s)}\rvert^p).
   \end{align}
   Therefore, one may substitute in \eqref{eq: splitting for proof}, \eqref{eq: splitting 1} and \eqref{eq: splitting 2}
       \begin{align}
  \lvert X^n_{t\wedge \tilde{\tau}_R^n} \rvert^{p}&\leq \lvert x_0\rvert^p+\int^{t\wedge \tilde{\tau}_R^n}_0 c(1+\lvert X^n_{s}\rvert^p+\lvert X^n_{\kappa_n(s)}\rvert^p)ds\nonumber \\
  &+\int^{t\wedge \tilde{\tau}_R^n}_0c(\lvert b_n(X^n_{\kappa_n(s)})\rvert^{p/2}+n^{p/2}\lvert  W_s-W_{\kappa_n(s)}\rvert^p)ds \nonumber \\
  &+\int^{t\wedge \tilde{\tau}_R^n}_0 p \lvert X^n_s\rvert^{p-2}\langle X_s, dW_s\rangle
    \end{align}
     and therefore taking supremum on the RHS then the LHS, squaring and applying H{\"o}lder's inequality, one may write
        \begin{align}
  \sup_{u\in [0,t]}\lvert X^n_{u\wedge \tilde{\tau}_R^n} \rvert^{2p}&\leq c\lvert x_0\rvert^{2p}+\int^{t\wedge \tilde{\tau}_R^n}_0 c(1+\sup_{u\in [0,s]}\lvert X^n_u\rvert^{2p})ds\nonumber \\
  &+\int^{t\wedge \tilde{\tau}_R^n}_0c(\lvert b_n(X^n_{\kappa_n(s)})\rvert^p+n^{p}\lvert  W_s-W_{\kappa_n(s)}\rvert^{2p})ds \nonumber \\
  &+c\sup_{u\in [0,t]}\biggr (\int^{u\wedge \tilde{\tau}_R^n}_0  \lvert X^n_s\rvert^{p-2}\langle X_s, dW_s\rangle\biggr)^2 \nonumber \\
  &= c\lvert x_0\rvert^{2p}+\int^{t\wedge \tilde{\tau}_R^n}_0 c(1+\sup_{u\in [0,s]}\lvert X^n_{u\wedge \tilde{\tau}_R^n}\rvert^{2p})ds\nonumber \\
  &+\int^{t\wedge \tilde{\tau}_R^n}_0c(\lvert b_n(X^n_{\kappa_n(s)\wedge \tilde{\tau}_R^n})\rvert^p+n^{p}\lvert  W_s-W_{\kappa_n(s)}\rvert^{2p})ds \nonumber \\
  &+c\sup_{u\in [0,t]}\biggr (\int^{u\wedge \tilde{\tau}_R^n}_0  \lvert X^n_{s\wedge \tilde{\tau}_R^n}\rvert^{p-2}\langle X_{s\wedge \tilde{\tau}_R^n}, dW_s\rangle\biggr)^2,
    \end{align}
    and therefore by the Burkholder-Davis-Gundy inequality one has by standard properties of a Wiener martingale that
     \begin{align}\label{eq: first moment bound scheme}
  E\sup_{u\in [0,t]}&\lvert X^n_{u\wedge \tilde{\tau}_R^n} \rvert^{2p}\leq \lvert x_0\rvert^p+\int^t_0 c(1+E\sup_{u\in [0,s]}\lvert X^n_{u\wedge \tilde{\tau}_R^n}\rvert^{2p})ds\nonumber \\
  &+\int^t_0cE\lvert b_n(X^n_{\kappa_n(s)\wedge \tilde{\tau}_R^n})\rvert^pds 
    \end{align}
       Now note by \eqref{eq: b upper bound 2}, \eqref{eq: rho bound} and the definition of $b_n$ in Theorem \ref{thm: main theorem}, one has
 \begin{equation}
        \lvert  b_n (X^n_{\kappa_n(s)\wedge \tilde{\tau}_R^n})\rvert\leq c(1+n^{2plw}+\lvert X^n_{\kappa_n(s)\wedge \tilde{\tau}_R^n} \rvert^2),
    \end{equation}
    and so
    \begin{align}
  E\sup_{u\in [0,t]}&\lvert X^n_{u\wedge \tilde{\tau}_R^n} \rvert^{2p}\leq \lvert x_0\rvert^p+\int^t_0 c(1+n^{2wl}+E\sup_{u\in [0,s]}\lvert X^n_{u\wedge \tilde{\tau}_R^n}\rvert^{2p})ds.
    \end{align}  
Therefore observing that 
\begin{equation}
\sup_{t\in [0,T]} \lvert X^n_{t\wedge \tilde{\tau}_R^n} \rvert^{p}\leq R\wedge \lvert x_0\rvert<\infty,
\end{equation}
almost surely by definition, the result follows by Grönwall's inequality, and then taking $R\to \infty$ (since $\lim_{R\to\infty} X^n_{t\wedge \tilde{\tau}_R^n} \to X^n_t$) and applying Fatou's lemma.
\end{proof}
Now we prove moments inverse moments of $\rho(X^n_t)$ up to $\tau^n$ given in \eqref{eq: tau n defn}.
\begin{lemma}\label{lemma: strong scheme inverse moment bounds}
    For every $p>0$, $T>0$ and there exists a constant $c>0$ such that
       \begin{align}\label{eq: strong scheme bound}
    \sup_{n \geq 1} E\sup_{t\in [0,T]}\rho(X^n_{t\wedge \tau^n})^{-p}<\infty.
    \end{align}
\end{lemma}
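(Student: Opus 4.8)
The plan is to mimic the proof of Lemma \ref{lemma: strong cont moment bounds}, but applied to the discretised process $X^n_t$ up to the stopping time $\tau^n$, carefully controlling the error terms that arise from the fact that the drift is evaluated at the grid point $X^n_{\kappa_n(s)}$ rather than at $X^n_s$. First I would apply the local It\^o formula (Proposition \ref{prop: local Ito}) to $\rho(X^n_{t\wedge\tau^n})^{-p}$, which is legitimate because on $[0,\tau^n)$ we have $\rho(X^n_t)$ bounded below by $\delta n^{-w}>0$ and, crucially, by \eqref{eq: b_n equal to b before stopping} the scheme's drift coincides with $b$ on this interval. This produces the same four integral terms as in \eqref{eq: cont time negative moments}, except that every occurrence of $b(X^n_s)$ in the drift integral is replaced by $b(X^n_{\kappa_n(s)})$. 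I would then split
\begin{equation*}
\langle \nabla\rho(X^n_s), b(X^n_{\kappa_n(s)})\rangle = \langle \nabla\rho(X^n_{\kappa_n(s)}), b(X^n_{\kappa_n(s)})\rangle + \langle \nabla\rho(X^n_s)-\nabla\rho(X^n_{\kappa_n(s)}), b(X^n_{\kappa_n(s)})\rangle,
\end{equation*}
using \eqref{assmp: rho pushes away} on the first term to extract the dominant negative $-h_1\rho(X^n_{\kappa_n(s)})^{-\alpha}$ contribution, and controlling the second (error) term via the local Lipschitz property of $\nabla\rho$ (it is $C^1$, hence Lipschitz on compact subsets of $D$, with a modulus that may blow up near $S$ like a negative power of $\rho$) together with the increment bound $\lvert X^n_s - X^n_{\kappa_n(s)}\rvert \lesssim n^{-1}\lvert b_n(X^n_{\kappa_n(s)})\rvert + \lvert W_s - W_{\kappa_n(s)}\rvert$ and the upper bound \eqref{eq: upper bound for b} on $b$.

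The main obstacle, and the reason for the restriction $w\in[0,\frac{1}{3l}]$, is showing that these discretisation error terms do not overwhelm the good negative term $-c_1\rho(X^n_{\kappa_n(s)})^{-p-1-\alpha}$ coming from \eqref{assmp: rho pushes away}. The key point is that $\rho(X^n_s)$ and $\rho(X^n_{\kappa_n(s)})$ are comparable up to the increment: since $\rho$ is $1$-Lipschitz, $\lvert \rho(X^n_s) - \rho(X^n_{\kappa_n(s)})\rvert \leq \lvert X^n_s - X^n_{\kappa_n(s)}\rvert$, and on $[0,\tau^n)$ this increment is $O(n^{-1/2})$ in a suitable sense while $\rho(X^n_{\kappa_n(s)}) \geq \delta n^{-w}$. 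With $lw\leq 1/3 < 1/2$ one can afford to lose powers of $n^{lw}$ from the $b$-bounds and powers of $n^{-1/2}$ from the increments and still keep the worst error term dominated — in the regime where $\rho$ is small, the $-\rho^{-p-1-\alpha}$ term wins, and in the regime where $\rho$ is bounded away from $0$ all negative-power terms are bounded. I would make this precise by an argument analogous to \eqref{eq: bound on polynomial}: after collecting terms, show that for a constant $c$ independent of $n$,
\begin{equation*}
-c_1 \rho(X^n_{\kappa_n(s)})^{-p-1-\alpha} + (\text{all error and lower-order terms}) \leq c\bigl(1 + \rho(X^n_{s\wedge\tau^n})^{-p} + n^{q}\lvert W_s - W_{\kappa_n(s)}\rvert^{q'}\bigr)
\end{equation*}
for appropriate $q,q'$, using Young's inequality to absorb cross terms and the comparability of $\rho$ at $s$ and $\kappa_n(s)$.

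Once this pointwise-in-the-integrand bound is in hand, the remainder follows exactly as in Lemma \ref{lemma: strong cont moment bounds}: the stochastic integral $-p\int_0^{t\wedge\tau^n}\rho(X^n_s)^{-p-1}\langle\nabla\rho(X^n_s),dW_s\rangle$ has bounded integrand on $[0,\tau^n)$ by the definition of $\tau^n$, hence is a true martingale; taking expectations and applying Gr\"onwall's inequality gives $\sup_{n\geq 1}\sup_{t\in[0,T]}E\,\rho(X^n_{t\wedge\tau^n})^{-p}<\infty$ for every $p>0$ (using also Lemma \ref{lemma: bad regular moment bounds scheme} to handle the $\lvert X^n_{\kappa_n(s)}\rvert$ factors, noting the polynomial-in-$n$ growth there is harmless once multiplied against the $n^{-1/2}$-scale increments and controlled powers of $n^{lw}$, and the Gaussian moments $E\lvert W_s - W_{\kappa_n(s)}\rvert^{q'} \leq c n^{-q'/2}$). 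Finally, to upgrade from moment bounds at fixed $t$ to the supremum over $[0,T]$, I would square the It\^o expansion, take $\sup_{u\in[0,t]}$, then expectations, and apply the Burkholder-Davis-Gundy inequality to the martingale term exactly as in the last display of the proof of Lemma \ref{lemma: strong cont moment bounds}, obtaining a Gr\"onwall-type inequality for $E\sup_{u\in[0,t]}\rho(X^n_{u\wedge\tau^n})^{-2p}$ in terms of $\int_0^t E\rho(X^n_{s\wedge\tau^n})^{-2p-2}\,ds$, which is finite and $n$-uniformly bounded by the fixed-time result applied with exponent $2p+2$.
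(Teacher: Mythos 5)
Your overall architecture (local It\^o formula up to $\tau^n$, martingale property of the stochastic integral from the boundedness of the integrand on $[0,\tau^n]$, Gr\"onwall for the fixed-time bound, then square/sup/Burkholder-Davis-Gundy/Gr\"onwall for the supremum bound) is the same as the paper's, and the last part of your plan is fine. The gap is in how you treat the discretisation error in the drift integral. You split by moving $\nabla\rho$ to the grid point, $\langle \nabla\rho(X^n_s), b(X^n_{\kappa_n(s)})\rangle = \langle \nabla\rho(X^n_{\kappa_n(s)}), b(X^n_{\kappa_n(s)})\rangle + \langle \nabla\rho(X^n_s)-\nabla\rho(X^n_{\kappa_n(s)}), b(X^n_{\kappa_n(s)})\rangle$, and then invoke a two-point estimate of the form $\lvert \nabla\rho(x)-\nabla\rho(y)\rvert \lesssim (1+\rho(x)^{-\beta}+\rho(y)^{-\beta})\lvert x-y\rvert$. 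No such estimate is among the assumptions: \eqref{assmp: second derivative rho} is a pointwise Hessian bound, and turning it into a two-point bound requires integrating $\nabla^2\rho$ along a path joining $X^n_{\kappa_n(s)}$ to $X^n_s$ that stays in $D$ with controlled $\rho$; since the Brownian increment is unbounded, on an event of positive probability at every step the one-step increment exceeds $\delta n^{-w}$ and the connecting segment can approach or cross $S$ (recall $\partial D\subset S$), so the estimate is simply unavailable pathwise. The same issue undermines your claimed comparability of $\rho(X^n_s)$ and $\rho(X^n_{\kappa_n(s)})$: the increment is $O(n^{-1/2})$ only in moments, whereas your domination argument (mixed-time products such as $\rho(X^n_s)^{-p-1}\rho(X^n_{\kappa_n(s)})^{-\alpha}$ versus the error terms and the Gr\"onwall term $\rho(X^n_{s\wedge\tau^n})^{-p}$) must hold pathwise inside the Lebesgue integral before expectations are taken. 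One could repair this by conditioning on the event that the Wiener increment is small and treating the complement separately (using that on $[0,\tau^n]$ all inverse powers of $\rho$ are bounded by fixed powers of $n^{w}$ while the bad event has super-polynomially small probability), but none of this is in your proposal.

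The paper avoids the problem by splitting in $b$ rather than in $\nabla\rho$: it writes $\langle \nabla\rho(X^n_s), b(X^n_{\kappa_n(s)})\rangle = \langle \nabla\rho(X^n_s), b(X^n_s)\rangle + \langle \nabla\rho(X^n_s), b(X^n_{\kappa_n(s)})-b(X^n_s)\rangle$, so that \eqref{assmp: rho pushes away} is applied at the same point $X^n_s$ as the prefactor $\rho(X^n_s)^{-p-1}$, exactly as in Lemma \ref{lemma: strong cont moment bounds}, and the only error is $\rho(X^n_s)^{-p-1}\lvert b(X^n_s)-b(X^n_{\kappa_n(s)})\rvert$. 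Young's inequality then separates this into a same-point term $\rho(X^n_s)^{-p-2-\beta}$, absorbed by the polynomial bound \eqref{eq: bound on polynomial}, plus a pure power of $\lvert b(X^n_{\kappa_n(s)\wedge\tau^n})-b(X^n_{s\wedge\tau^n})\rvert$, whose expectation is controlled directly from the assumed two-point bound \eqref{assmp: inverse poly Lipschitz b} (valid for all pairs in $D$, no path argument needed), \eqref{eq: rho bound}, Lemma \ref{lemma: bad regular moment bounds scheme} and the Gaussian increment moments; this is precisely where $lw\le 1/3$ enters, through the exponent $3lw-1\le 0$. If you rework your error term along these lines, the rest of your argument goes through as you describe.
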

\begin{proof}
Applying Proposition \ref{prop: local Ito} for $\epsilon=\delta n^{-w}$, by \eqref{eq: b_n equal to b before stopping} one sees that
         \begin{align}\label{eq: scheme negative moments}
    \rho(X^n_{t\wedge \tau^n})^{-p}&=\rho(x_0)^{-p}-p\int^{t\wedge \tau^n}_0\rho(X^n_s)^{-p-1}\langle \nabla \rho(X^n_s), b(X^n_{\kappa_n(s)}) \rangle ds\nonumber \\
    &-\frac{p}{2} \int^{t\wedge \tau^n}_0 \rho(X^n_s)^{-p-1}tr(\nabla ^2\rho(X^n_s))ds \nonumber \\
    &+\frac{p(p+1)}{2}\int^{t\wedge \tau^n}_0\rho(X^n_s)^{-p-2}tr(\nabla \rho(X^n_s)\otimes \nabla \rho(X^n_s))   ds\nonumber \\
    &-p\int^{t\wedge \tau^n}_0\rho(X^n_s)^{-p-1}\langle \nabla \rho(X^n_s),dW_s \rangle .
    \end{align}
    Let us first control the first integral. By \eqref{assmp: rho pushes away} and Young's inequality, and since by \eqref{eq: Lipschitz calc} one has $\lvert \nabla \rho\rvert\leq 1$, for $\beta>0$ as given in Section \ref{sec: assump and main thm} one may write 
    \begin{align}
&-p\int^{t\wedge \tau^n}_0\rho(X^n_s)^{-p-1}\langle \nabla \rho(X^n_s), b(X^n_{\kappa_n(s)}) \rangle ds\nonumber \\
&\leq -p\int^{t\wedge \tau^n}_0\rho(X^n_s)^{-p-1}\langle \nabla \rho(X^n_s), b(X^n_s) \rangle ds \nonumber \\
&+p\int^{t\wedge \tau^n}_0\rho(X^n_s)^{-p-1}\langle \nabla \rho(X^n_s), b(X^n_s) -b(X^n_{\kappa_n(s)})  \rangle ds \nonumber 
\\
&\leq  -p\int^{t\wedge \tau^n}_0\rho(X^n_s)^{-p-1}\langle \nabla \rho(X^n_s), b(X^n_s) \rangle ds \nonumber \\
&+c \int^{t\wedge \tau^n}_0 (\lvert b(X^n_{\kappa_n(s)}) - b(X^n_s) \rvert^{\frac{p+1}{\beta+1}+1} +\rho(X_s)^{-p-2-\beta})ds.
    \end{align}
Therefore, as in \eqref{eq: Ito cont with bound applied}, using \eqref{assmp: rho pushes away} and \eqref{assmp: second derivative rho} one may find $c_1, c_2, c_3, c_4>0$ not dependent on $n\in \mathbb{N}$ or $t\in [0,T]$ such that
           \begin{align}\label{eq: Ito scheme with bound applied}
    &\rho(X^n_{t\wedge \tau^n})^{-p}\leq\rho(x_0)^{-p}+\int^{t\wedge \tau^n}_0(-c_1\rho(X^n_s)^{-p-1-\alpha}+c_2\rho(X^n_s)^{-p-1-\beta})ds\nonumber \\
   & +\int^{t\wedge \tau^n}_0(c_3\rho(X^n_s)^{-p-1}+c_4\rho(X^n_s)^{-p-1-\beta})ds \nonumber \\
    &+\int^{t\wedge \tau^n}_0c_5 \rho(X^n_s)^{-p}ds +\int^{t\wedge \tau^n}_0 c\lvert b(X^n_{\kappa_n(s)}) - b(X^n_s)\rvert^{\frac{p+1}{\beta+1}+1} ds\nonumber \\
    &-p\int^{t\wedge \tau^n}_0\rho(X_s)^{-p-1}\langle \nabla \rho(X_s),dW_s \rangle .
    \end{align}
As a result, using \eqref{eq: bound on polynomial}, there exists a constant $c>0$ independent of $t>0$ and $n\geq 1$ such that
        \begin{align}\label{eq: moment rho bounds scheme calc}
    \rho(X^n_{t\wedge \tau^n})^{-p}&\leq\rho(x_0)^{-p}+ct+\int^{t\wedge \tau^n}_0c_5 \rho(X^n_{s\wedge \tau^n})^{-p}ds\nonumber \\
    &+\int^{t}_0 c \lvert b(X^n_{\kappa_n(s)\wedge \tau^n}) - b(X^n_{s\wedge \tau^n})\rvert ^{\frac{p+1}{\beta+1}+1} ds\nonumber \\
    &-p\int^{t\wedge \tau^n}_0\rho(X_s)^{-p-1}\langle \nabla \rho(X_s),dW_s \rangle .
    \end{align}
Furthermore, applying \eqref{assmp: inverse poly Lipschitz b} and \eqref{eq: rho bound} one has
    \begin{align}\label{eq: final calc scheme inverse moments}
        & \lvert b(X^n_{\kappa_n(s)\wedge \tau^n}) - b(X^n_{s\wedge \tau^n})\rvert\nonumber \\
        &\leq c(1+n^{lw})(n^{-1}\lvert b(X^n_{\kappa_n(s)\wedge \tau^n}) \rvert+\lvert W_{s\wedge \tau^n}-W_{\kappa_n(s)\wedge \tau^n}\rvert).
    \end{align}
    Additionally, by \eqref{eq: upper bound for b} and \eqref{eq: rho bound}
     \begin{equation}
         \lvert b(X^n_{\kappa_n(s)\wedge \tau^n}) \rvert\leq c(1+n^{lw})(1+\lvert X^n_{\kappa_n(s)\wedge \tau^n}\rvert),
    \end{equation}
   and so
    \begin{align}\label{eq: incremement bdd for stopped scheme}
    \lvert b(X^n_{\kappa_n(s)\wedge \tau^n})& - b(X^n_{s\wedge \tau^n})\rvert \leq c(1+n^{2lw-1})(1+\lvert X^n_{\kappa_n(s)\wedge \tau^n}\rvert)\nonumber \\
    &+c(1+n^{lw})\lvert W_{s\wedge \tau^n}-W_{\kappa_n(s)\wedge \tau^n}\rvert.
    \end{align}
    Now to bound the increment of the stochastic integral note that
    \begin{align}\label{eq: stopped increment of Wiener martingale}
        \lvert W_{s\wedge \tau^n}-W_{\kappa_n(s)\wedge \tau^n}\rvert \leq \sup_{u\in [\kappa_n(s), \kappa_n(s)+n^{-1}]}\lvert W_u-W_{\kappa_n(s)}\rvert,
    \end{align}
  and therefore, for any $q>0$, by Lemma \ref{lemma: bad regular moment bounds scheme} one has
 \begin{align}
    E\lvert b(X^n_{\kappa_n(s)\wedge \tau^n}) - b(X^n_{s \wedge \tau^n})\rvert^q\leq c(n^{q(3lw-1)}+1).
    \end{align}
Then since by assumption $lw\leq 1/3$, one obtains that the RHS is bounded independent of $n$. Noting that the stochastic integral in \eqref{eq: moment rho bounds scheme calc} must be a martingale since the integrand is bounded by the definition of $\tau^n$, we may apply expectation to \eqref{eq: moment rho bounds scheme calc}, and then Grönwall's inequality, to obtain
        \begin{align}
    \sup_{n\geq 1}\sup_{t\in [0,T]}E\rho(X^n_{t\wedge \tau^n})^{-p}<\infty.
    \end{align}
Therefore, proceeding similarly to the conclusion of the proofs of Lemma \ref{lemma: strong cont moment bounds} and Lemma \ref{lemma: cont regular moments}, we may square \eqref{eq: moment rho bounds scheme calc}, apply supremum on both sides, and then apply expectation, the Burkholder-David-Gundy inequality and Grönwall's inequality, to obtain the result.
\end{proof}
We may now upgrade the bound in Lemma \ref{lemma: bad regular moment bounds scheme} to a bound which is uniform in $n\geq 1$, up to $\tau^n$ given in \eqref{eq: tau n defn}.
\begin{lemma}\label{lemma: regular moment bounds scheme good}
    For every $p>0$ and $T>0$ there exists a constant $c>0$ such that
       \begin{align}\label{eq: regular moment bounds scheme good}
    \sup_{n \geq 1} E\sup_{t\in [0,T]}\lvert X^n_{t\wedge \tau^n}\rvert^p <\infty.
    \end{align}
\end{lemma}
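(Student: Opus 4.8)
The plan is to repeat the argument of Lemma~\ref{lemma: bad regular moment bounds scheme} (equivalently, of Lemma~\ref{lemma: cont regular moments}), but now stopping at the stopping time $\tau^n$ of \eqref{eq: tau n defn}, so that wherever that proof lost a power of $n$ through the crude estimate $\rho(X^n_\cdot)^{-l}\le cn^{lw}$ we may instead invoke the $n$-uniform inverse-moment bound of Lemma~\ref{lemma: strong scheme inverse moment bounds}. It suffices to prove the claim for $p\ge 2$, the case of general $p>0$ then following by Jensen's inequality. As in the earlier proofs I would introduce $\tilde\tau^n_R:=\inf\{t\ge 0:\lvert X^n_t\rvert\ge R\}$, which increases to $+\infty$ a.s.\ as $R\to\infty$ for each fixed $n$ by Lemma~\ref{lemma: bad regular moment bounds scheme}, set $\theta^n_R:=\tau^n\wedge\tilde\tau^n_R$, establish the bound for $\lvert X^n_{t\wedge\theta^n_R}\rvert^{p}$ uniformly in $n$ and $R$, and let $R\to\infty$ via Fatou's lemma at the end.

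Applying It\^o's formula to $\lvert X^n_{t\wedge\theta^n_R}\rvert^{p}$ and using \eqref{eq: b_n equal to b before stopping} to replace $b_n$ by $b$ before $\tau^n$ gives the analogue of \eqref{eq: Ito for stopped}; I would then split the drift term as in \eqref{eq: splitting for proof}. The ``frozen'' contribution $\lvert X^n_s\rvert^{p-2}\langle X^n_{\kappa_n(s)},b(X^n_{\kappa_n(s)})\rangle$ is bounded as in \eqref{eq: splitting 1}, except that the term $\lvert b_n(X^n_{\kappa_n(s)})\rvert^{p/2}$ produced there is now estimated by \eqref{eq: b upper bound 2} \emph{without} invoking \eqref{eq: rho bound}, i.e.\ by $c\big(1+\rho(X^n_{\kappa_n(s)})^{-lp}+\lvert X^n_{\kappa_n(s)}\rvert^{p}\big)$; hence the frozen contribution is at most $c\big(1+\lvert X^n_s\rvert^{p}+\lvert X^n_{\kappa_n(s)}\rvert^{p}+\rho(X^n_{\kappa_n(s)})^{-lp}\big)$. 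The ``increment'' contribution $\lvert X^n_s\rvert^{p-2}\langle X^n_s-X^n_{\kappa_n(s)},b(X^n_{\kappa_n(s)})\rangle$ is treated exactly as in \eqref{eq: splitting 2}, via the one-step bound $\lvert X^n_s-X^n_{\kappa_n(s)}\rvert\le c\big(n^{-1}\lvert b(X^n_{\kappa_n(s)})\rvert+\lvert W_s-W_{\kappa_n(s)}\rvert\big)$, the growth bound \eqref{eq: upper bound for b}, the crude estimate $\rho(X^n_{\kappa_n(s)})^{-l}\le c(1+n^{lw})$ valid up to $\tau^n$ by \eqref{eq: rho bound}, and the inequality $lw\le 1/2$: this yields $c\big(1+\lvert X^n_s\rvert^{p}+\lvert X^n_{\kappa_n(s)}\rvert^{p}+n^{p/2}\lvert W_s-W_{\kappa_n(s)}\rvert^{p}\big)$. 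The It\^o correction term contributes at most $c(1+\lvert X^n_s\rvert^{p})$.

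Substituting these bounds, squaring, taking $\sup_{t\in[0,T]}$ and then expectation, and applying the Burkholder--Davis--Gundy inequality to the martingale part and the Cauchy--Schwarz inequality to the drift integral, one reaches an inequality of the form
\[
E\sup_{u\in[0,t]}\lvert X^n_{u\wedge\theta^n_R}\rvert^{2p}\le c+c\int_0^{t}\Big(E\sup_{u\in[0,s]}\lvert X^n_{u\wedge\theta^n_R}\rvert^{2p}+E\sup_{u\in[0,T]}\rho(X^n_{u\wedge\tau^n})^{-2lp}+n^{p}\,E\lvert W_s-W_{\kappa_n(s)}\rvert^{2p}\Big)\,ds.
\]
Here $n^{p}\,E\lvert W_s-W_{\kappa_n(s)}\rvert^{2p}\le c$ by the Gaussian moment formula, and $E\sup_{u\in[0,T]}\rho(X^n_{u\wedge\tau^n})^{-2lp}\le c$ uniformly in $n$ by Lemma~\ref{lemma: strong scheme inverse moment bounds}, using that for $s\le\tau^n$ one has $\kappa_n(s)\le\tau^n$ and hence $\rho(X^n_{\kappa_n(s)})^{-1}\le\sup_{u\in[0,T]}\rho(X^n_{u\wedge\tau^n})^{-1}$ pathwise. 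Since $\sup_{u\in[0,T]}\lvert X^n_{u\wedge\theta^n_R}\rvert\le R\vee\lvert x_0\rvert<\infty$ a.s.\ by the definition of $\tilde\tau^n_R$, Gr\"onwall's inequality gives a bound on $E\sup_{u\in[0,T]}\lvert X^n_{u\wedge\theta^n_R}\rvert^{2p}$ that is uniform in both $n\ge 1$ and $R>0$; letting $R\to\infty$ and invoking Fatou's lemma yields \eqref{eq: regular moment bounds scheme good} for the exponent $2p$, hence for every exponent $\ge 4$, and hence, by Jensen's inequality, for every $p>0$.

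The crux — and the only genuinely new point compared with Lemma~\ref{lemma: bad regular moment bounds scheme} — is the uniformity in $n$: a verbatim repetition of that lemma produces a constant that grows with $n$, coming from the term $E\lvert b_n(X^n_{\kappa_n(s)})\rvert^{p}$, for which the indicator in the definition of $b_n$ only supplies $\rho^{-l}\le cn^{lw}$. Stopping at $\tau^n$ lets us replace this harmful crude bound by the $n$-uniform inverse-moment bound of Lemma~\ref{lemma: strong scheme inverse moment bounds}, whereas the error terms that truly originate from the time discretisation still use $\rho^{-l}\le cn^{lw}$ but survive thanks to the accompanying factor $n^{-1}$ (from the Euler step) or $n^{-1/2}$ (from the Wiener increment), together with $lw\le 1/2$, which is implied by $w\in[0,\frac{1}{3l}]$. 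One must also check that $\kappa_n(s)\le\tau^n$ whenever $s\le\tau^n$, so that the inverse moments of $\rho(X^n_{\kappa_n(s)})$ are indeed controlled by Lemma~\ref{lemma: strong scheme inverse moment bounds}.
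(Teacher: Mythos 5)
Your proposal is correct and follows essentially the same route as the paper: stop at $\tau^n$, keep the crude bound $\rho^{-l}\leq cn^{lw}$ only in the discretisation-error terms (where it is offset by $n^{-1}$ or the Wiener increment, using $lw\leq 1/3$), and control the harmful $E\lvert b_n(X^n_{\kappa_n(s)\wedge\tau^n})\rvert^p$ term uniformly in $n$ via \eqref{eq: b upper bound 2} and Lemma \ref{lemma: strong scheme inverse moment bounds}, before closing with Gr\"onwall. The only difference is cosmetic: you rerun the It\^o/BDG argument with the combined stopping time $\tau^n\wedge\tilde\tau^n_R$ and a final Fatou passage, whereas the paper simply reuses \eqref{eq: first moment bound scheme} with $\tilde\tau^n_R$ replaced by $\tau^n$, the a priori finiteness needed for Gr\"onwall being supplied by Lemma \ref{lemma: bad regular moment bounds scheme}.
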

\begin{proof}
Using \eqref{eq: b upper bound 2} and \eqref{eq: b_n equal to b before stopping}, by the definition of $b_n$ one has
\begin{equation}
    \lvert b_n(X^n_{\kappa_n(s)\wedge \tau^n})\rvert \leq   \lvert b(X^n_{\kappa_n(s)\wedge \tau^n})\rvert\leq c(1+\rho(X^n_{\kappa_n(s)\wedge \tau^n})^{-2l}+\lvert X^n_{\kappa_n(s)\wedge \tau^n}\rvert^2).
\end{equation}
so that by Lemma \ref{lemma: strong scheme inverse moment bounds} one sees that
\begin{equation}\label{eq: bn expression}
    E\lvert b_n(X^n_{\kappa_n(s)\wedge \tau^n})^p\rvert \leq c(1+E\lvert X^n_{\kappa_n(s)\wedge \tau^n}\rvert^{2p}).
\end{equation}
Now observe that \eqref{eq: first moment bound scheme} holds with $\tilde{\tau}^n_R$ replaced with $\tau^n$, since the calculations prior hold almost surely. Therefore substituting in \eqref{eq: bn expression} and applying Grönwall's inequality the result follows.
\end{proof}
\section{Proof of Theorem \ref{thm: main theorem}}
The fact $X_t$ is well defined for all $t\geq 0$ follows from Lemma \ref{lemma: true solution cont for all time}, and the fact $X^n_t$ is well defined is obvious by the Euler scheme structure. To prove convergence, let us assume $p\geq 2$, so that one may apply the chain rule and \eqref{eq: b_n equal to b before stopping} to obtain that
    \begin{align}
        \lvert X_{t\wedge \tau^n}-X^n_{t\wedge \tau^n}\rvert^p&=\int^{t\wedge \tau^n}_0 p \lvert X_s-X^n_s\rvert^{p-2}\langle X_s-X^n_s, b(X_s)-b(X^n_{\kappa_n(s)})\rangle ds \nonumber \\
        &=\int^{t\wedge \tau^n}_0 p \lvert X_s-X^n_s\rvert^{p-2}\langle X_s-X^n_s, b(X_s)-b(X^n_s)\rangle ds\nonumber \\
        &+\int^{t\wedge \tau^n}_0 p \lvert X_s-X^n_s\rvert^{p-2}\langle X_s-X^n_s, b(X^n_s)-b(X^n_{\kappa_n(s)}) \rangle ds .\nonumber 
    \end{align}
    Then applying \eqref{assmp: monotonicity} and Young's inequality, one sees that there exists $c>0$ independent of $t\geq 0$ and $n\geq 1$ such that
    \begin{align}
        \lvert X_{t\wedge \tau^n}&-X^n_{t\wedge \tau^n}\rvert^p=\int^{t\wedge \tau^n}_0 c (\lvert X_s-X^n_s\rvert^p+\lvert b(X^n_{\kappa_n(s)})-b(X^n_s) \rvert^p) ds  \nonumber \\
        &\leq \int^{t\wedge \tau^n}_0 c (\sup_{u\in [0,s]}\lvert X^n_{u \wedge \tau^n}-X^n_{u \wedge \tau^n}\rvert^p+\lvert b(X^n_{\kappa_n(s)\wedge \tau^n})-b(X^n_{s\wedge \tau^n}) \rvert^p) ds.
    \end{align} 
    Therefore, since the RHS is increasing in $t\geq0$ one has
   \begin{align}
       \sup_{u\in [0,t]} &\lvert X_{u\wedge \tau^n}-X^n_{u\wedge \tau^n}\rvert^p \nonumber \\
       &\leq \int^t_0 c (\sup_{u\in [0,s]}\lvert X_{u\wedge \tau^n}-X^n_{u\wedge \tau^n}\rvert^p+\lvert b(X^n_{\kappa_n(s)\wedge \tau^n})-b(X^n_{s\wedge \tau^n}) \rvert^p) ds.
    \end{align}    
   Now let us calculate by \eqref{assmp: inverse poly Lipschitz b}, \eqref{eq: b upper bound 2}, \eqref{eq: stopped increment of Wiener martingale}, Lemma \ref{lemma: strong scheme inverse moment bounds} and Lemma \ref{lemma: regular moment bounds scheme good} that
 \begin{align}
    &E\lvert b(X^n_{\kappa_n(s)\wedge \tau^n})-b(X^n_{s\wedge \tau^n}) \rvert^p \nonumber \\
    &\leq \biggr [E\biggr (1+ \rho(X^n_{s\wedge \tau^n})^{-l}+ \rho(X^n_{\kappa_n(s)\wedge \tau^n})^{-l}\biggr)^{2p} \biggr ]^{1/2}\nonumber \\
    &\times \biggr [ E\biggr (n^{-1}\lvert b(X^n_{\kappa_n(s)\wedge \tau^n}) \rvert+\lvert W_{s\wedge \tau^n}-W_{\kappa_n(s)\wedge \tau^n}\rvert \biggr)^{2p} \biggr ]^{1/2}\nonumber \\
    &\leq cn^{-p/2},
    \end{align} 
    and therefore
    \begin{align}
       E\sup_{u\in [0,t\wedge \tau^n]}& \lvert X_{u\wedge \tau^n}-X^n_{u\wedge \tau^n}\rvert^p\nonumber \\
       &\leq \int^{t\wedge \tau^n}_0 c (E\sup_{u\in [0,s]}\lvert X_{u\wedge \tau^n}-X^n_{u\wedge \tau^n} \rvert^p+n^{-p/2} ) ds.
    \end{align} 
    Now finally note that by Lemma \ref{lemma: cont regular moments} and Lemma \ref{lemma: strong scheme inverse moment bounds}, one has that 
    \begin{equation}
    E\sup_{u\in [0,T]} \lvert X_{u\wedge \tau^n}-X^n_{u\wedge \tau^n}\rvert^p<\infty, 
    \end{equation}
    so by Grönwall's inequality there exists $c>0$ independent of $n\geq 1$ such that
    \begin{align}\label{eq: main theorem 1}
       E\sup_{u\in [0,T]} \lvert X_{u\wedge \tau^n}-X^n_{u\wedge \tau^n}\rvert^p\leq cn^{-p/2}.
    \end{align} 
   Now note that for any $q>0$, by Markov's inequality and Lemma \ref{lemma: strong scheme inverse moment bounds} there exists $c>0$ independent of $n\geq 1$ such that
\begin{equation}\label{eq: proof stopping time unlikely}
    P(\tau^n\leq T)\leq P(\sup_{t\in [0,T]} \rho(X^n_{t\wedge \tau^n})^{-q}>\delta^{-q} n^{wq})\leq cn^{-wq}.
\end{equation}
Therefore since by Lemmas \ref{lemma: bad regular moment bounds scheme} and \ref{lemma: regular moment bounds scheme good}
\begin{align}
    E\sup_{u\in [0,T]}\lvert X^n_{u\wedge \tau^n}-X^n_u \rvert^p &= E\sup_{u\in [0,T]}\lvert X^n_{u\wedge \tau^n}-X^n_u \rvert ^p 1_{\tau^n\leq T} \nonumber \\
    &\leq [E(\sup_{u\in [0,T]}\lvert X^n_{u\wedge \tau^n}\rvert^{2p}+\sup_{u\in [0,T]}\lvert X^n_u \rvert^{2p})P(\tau^n\leq T)]^{1/2}\nonumber \\
    &\leq c(1+n^{pwl})P(\tau^n\leq T)^{1/2},
\end{align}
by setting $q>0$ sufficiently large in \eqref{eq: proof stopping time unlikely} it follows that
\begin{equation}\label{eq: main theorem 2}
     E\sup_{u\in [0,T]}\lvert X^n_{u\wedge \tau^n}-X^n_u \rvert^p\leq cn^{-p/2}.
\end{equation}
 Similarly one may use Lemma \ref{lemma: cont regular moments} to conclude
 \begin{equation}\label{eq: main theorem 3}
     E\sup_{u\in [0,T]}\lvert X_{u\wedge \tau^n}-X_u \rvert^p\leq cn^{-p/2}.
\end{equation}
The result then follows from the triangle inequality by combining \eqref{eq: main theorem 1}, \eqref{eq: main theorem 2} and \eqref{eq: main theorem 3}.

\section{Interacting Particles with Singular Interaction Kernal}\label{sec: Interacting Particles}
In this section we show that the hypothesis of Theorem \ref{thm: main theorem} is satisfied in the case of two scalar valued interacting particles with singular interactions. Interacting particles with singular drift is a key example and a primary motivation for the study of SDEs with non-locally integrable drift. In future work we hope to consider more general settings, in particular where \eqref{assmp: monotonicity} and \eqref{assmp: rho pushes away} are replaced by more general assumptions.

For Corollary \ref{cor: interacting particles} one may take $K$ equal to the gradient of the well-known potential known as the Lennard-Jones interaction, used widely in molecular dynamics. Specifically, for $a_1,a_2>0$ and $p>1$, $0<q<p$ one sets $K:(0,\infty)\to \mathbb{R}$ to be
\begin{equation}
    K(x)=-\nabla (a_1 x^{-p}-a_2x^{-q})(x)=(pa_1 x^{-p-1}-qa_2x^{-q-1}).
\end{equation}
See \cite{29acd3d494044594aea0829ef236aad6} for more information. The function $Q$ in Corollary \ref{cor: interacting particles} is usually given as the gradient of a `confining potential', and ensures the particles do not escape to infinity. One could choose for instance $Q(x)=-\lambda x$ for some $\lambda>0$. Our assumptions also cover the family of interacting particles systems considered in \cite{Guillin2022OnSO} for $\alpha>2$ and $N=2$.

\begin{corollary}\label{cor: interacting particles}
    Consider interacting particles $X^{i,N}$, $i=1,2$ taking values in $\mathbb{R}$, given as 
\begin{equation}\label{eq: particle interactions 1}
    dX^{1,N}_t=Q(X^{1,N})-K(X^{2,N}-X^{1,N})dt+dW^1_t,\;\;\; X^{1,N}_0=x^1_0.
\end{equation}
\begin{equation}\label{eq: particle interactions 2}
    dX^{2,N}_t=Q(X^{2,N})+K(X^{2,N}-X^{1,N})dt+dW^2_t,\;\;\; X^{2,N}_0=x^2_0.
\end{equation}
for $x^1_0< x^2_0$, a Lipschitz function $Q:\mathbb{R}\to\mathbb{R}$, and some interaction kernel $K:(0,\infty)\to\mathbb{R}$ for which we assume that there exists $c,l>0$ such that for all $x,y>0$ 
\begin{equation}\label{eq: K monotonicity}
    (K(x)-K(y))(x-y)\leq c\lvert x -y\rvert^2,
\end{equation}
\begin{equation}\label{eq: K inverse poly Lipschitz}
    \lvert K(x)-K(y)\rvert\leq c(1+x^{-l}+y^{-l})\lvert x-y \rvert.
\end{equation}
Assume additionally that there exist $h_1, h_2,h_3, h_4> 0$ and $0<\beta<\alpha -1$ such that for every $x>0$
\begin{equation}\label{eq: K pushes away}
   K(x)\geq h_1 x^{-\alpha}-h_2 x^{-\beta} -h_3-h_4x.
\end{equation}
Then there exists $D\subset \mathbb{R}^2$ such that diffusion $X=(X^{1,N}, X^{2,N})$ satisfies the assumptions in Section \ref{sec: assump and main thm}. Therefore the conclusion of Theorem \ref{thm: main theorem} holds for \eqref{eq: particle interactions 1}, \eqref{eq: particle interactions 2}.
\end{corollary}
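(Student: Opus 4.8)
The plan is to produce an open set $D\subset\mathbb{R}^2$ and a singular set $S$ so that, with $X=(X^{1,N},X^{2,N})$ and the two-dimensional Brownian motion $W=(W^1,W^2)$, the system \eqref{eq: particle interactions 1}--\eqref{eq: particle interactions 2} is an instance of \eqref{eq: SDE main} satisfying every hypothesis of Section \ref{sec: assump and main thm}; the corollary then follows at once from Theorem \ref{thm: main theorem}. I would take $D:=\{(x^1,x^2):x^1<x^2\}$ and $S:=\{(x^1,x^2):x^1=x^2\}$. Then $x_0=(x^1_0,x^2_0)\in D$ by hypothesis, $D\cap S=\emptyset$, and $\partial D=S\subseteq S$. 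Setting $b(x^1,x^2):=\big(Q(x^1)-K(x^2-x^1),\,Q(x^2)+K(x^2-x^1)\big)$ on $D$ and $b:=0$ on $\mathbb{R}^2\setminus D$ gives a measurable $b:\mathbb{R}^2\to\mathbb{R}^2$ (note $K$ is continuous on $(0,\infty)$ by \eqref{eq: K inverse poly Lipschitz}), and \eqref{eq: particle interactions 1}--\eqref{eq: particle interactions 2} then reads $dX_t=b(X_t)\,dt+dW_t$.

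The first computation is the identification of $\rho$. Since $S$ is the line $x^1=x^2$, for $x=(x^1,x^2)\in D$ one has $\rho(x)=\mathrm{dist}(x,S)=(x^2-x^1)/\sqrt2$, which is affine, hence $\rho\in C^\infty(D)\subset C^2(D)$, with constant gradient $\nabla\rho=\tfrac1{\sqrt2}(-1,1)$ and $\nabla^2\rho\equiv0$. In particular $|\nabla^2\rho|\equiv 0$, so \eqref{assmp: second derivative rho} holds trivially for every exponent, and this is precisely the slack that will let me reconcile the Corollary's $\beta\in(0,\alpha-1)$ with the constraint $1<\beta<\alpha$ imposed in Section \ref{sec: assump and main thm}.

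Next I would check the three remaining conditions by direct substitution. For \eqref{assmp: monotonicity}, expanding $\langle b(x)-b(y),x-y\rangle$ produces the two differences $(Q(x^i)-Q(y^i))(x^i-y^i)$, each at most $L_Q|x-y|^2$ where $L_Q$ is the Lipschitz constant of $Q$, plus the single term $(K(u)-K(v))(u-v)$ with $u=x^2-x^1>0$, $v=y^2-y^1>0$, which by \eqref{eq: K monotonicity} and $|u-v|\le\sqrt2\,|x-y|$ is at most $2c\,|x-y|^2$; summing gives \eqref{assmp: monotonicity}. For \eqref{assmp: rho pushes away}, constancy of $\nabla\rho$ gives $\langle\nabla\rho(x),b(x)\rangle=\tfrac1{\sqrt2}\big(Q(x^2)-Q(x^1)+2K(x^2-x^1)\big)$; substituting $x^2-x^1=\sqrt2\,\rho(x)$, bounding $Q(x^2)-Q(x^1)\ge-L_Q\sqrt2\,\rho(x)$, and applying \eqref{eq: K pushes away} at the point $\sqrt2\,\rho(x)$ yields $\langle\nabla\rho(x),b(x)\rangle\ge H_1\rho(x)^{-\alpha}-H_2\rho(x)^{-\beta}-H_3-H_4\rho(x)$ for suitable $H_i>0$, with $\beta$ the Corollary's exponent; since $\beta<\alpha-1$ forces $\alpha>1$, I would then fix $\beta^\ast\in(\max(\beta,1),\alpha)$ and use $\rho(x)^{-\beta}\le 1+\rho(x)^{-\beta^\ast}$ to rewrite this in the exact form of \eqref{assmp: rho pushes away} with admissible exponent $\beta^\ast\in(1,\alpha)$. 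Finally, for \eqref{assmp: inverse poly Lipschitz b}, the $Q$-part of $b(x)-b(y)$ contributes a Lipschitz term and the $K$-part is controlled by \eqref{eq: K inverse poly Lipschitz} together with $(x^2-x^1)^{-l}=2^{-l/2}\rho(x)^{-l}$ (and the analogue for $y$) and $|(x^2-x^1)-(y^2-y^1)|\le\sqrt2\,|x-y|$, giving the claimed inequality with the same $l$. With all hypotheses of Section \ref{sec: assump and main thm} in force for $X$, Theorem \ref{thm: main theorem} applies and the corollary follows.

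I expect no genuine obstacle here; the verification is essentially mechanical once $D$, $S$ and $\rho$ are chosen correctly. The only point that needs a moment's care is the exponent bookkeeping in \eqref{assmp: rho pushes away} versus \eqref{assmp: second derivative rho}: because the chosen $\rho$ is affine the Hessian condition is vacuous, which is exactly what permits pushing the Corollary's (possibly sub-$1$) exponent $\beta$ up to a value strictly inside $(1,\alpha)$. A secondary, purely cosmetic, subtlety is that the various constants denoted $c$ across \eqref{assmp: monotonicity}--\eqref{assmp: inverse poly Lipschitz b} should be consolidated by taking maxima, and the factors of $\sqrt2$ coming from the unit normal to $S$ should be absorbed into $h_1,\dots,h_4$.
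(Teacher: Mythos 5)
Your proof is correct and follows essentially the same route as the paper: the same choice of $D=\{x^1<x^2\}$, $S=\{x^1=x^2\}$ and $\rho(x)=(x^2-x^1)/\sqrt{2}$, followed by direct verification of \eqref{assmp: monotonicity}, \eqref{assmp: rho pushes away} and \eqref{assmp: inverse poly Lipschitz b} using the Lipschitz property of $Q$ and \eqref{eq: K monotonicity}--\eqref{eq: K pushes away}. Your extra step of lifting the Corollary's exponent $\beta\in(0,\alpha-1)$ to some $\beta^{*}\in(1,\alpha)$ via $\rho^{-\beta}\le 1+\rho^{-\beta^{*}}$ (harmless because $\nabla^{2}\rho\equiv 0$ makes \eqref{assmp: second derivative rho} vacuous) addresses a bookkeeping point the paper's proof leaves implicit, but it does not change the argument.
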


\begin{proof}
    We set
    \begin{equation}
        D= \{x\in \mathbb{R}^2 \vert \; x^1<x^2 \},\;\;\;S=\{x\in \mathbb{R}^2 \vert \;x^1=x^2\} 
       ,
    \end{equation}
     so that standard calculus shows
    \begin{equation}
        \rho(x)=\inf_{y\in \mathbb{R}}\lvert (x_1,y_2)-(y,y)\rvert=\frac{x^{2}-x^1}{\sqrt{2}}.
    \end{equation}
One therefore has that $D$ is path connected $x_0\in D$, $D\cap S=\emptyset$ and $\partial D\subset S$, as required. To show \eqref{assmp: monotonicity} let us fix two elements $x,y \in D$ and denote by $b$ the drift coefficient of the entire system, so that by \eqref{eq: K monotonicity} and the fact $Q$ is Lipschitz one has
    \begin{align}
        \langle b(x)-b(y),x-y\rangle &= ((y^2-y^1)-(x^2-x^1))(K(y^2-y^1)-K(x^2-x^1))\nonumber \\
        &+\sum_{i=1,2}(Q(x^i)-Q(y^i))(x^i-y^i)\nonumber \\
        &\leq c\lvert x-y\rvert^2.
    \end{align}
To show \eqref{assmp: rho pushes away} one may use \eqref{eq: K pushes away} and the Lipschitz assumption on $Q$, in addition to the fact that $x^1<x^2$, to show that there exists $\tilde{h}_1, \tilde{h}_2, \tilde{h}_3, \tilde{h}_4>0$ such that for all $x\in D$ 
    \begin{align}
        &\langle b(x), \nabla \rho\rangle = 2^{1/2} K(x^2-x^1)+2^{-1/2}(Q(x^2)-Q(x^1)) \nonumber \\
        &\geq  \tilde{h}_1 \rho(x)^{-\alpha}- \tilde{h}_2\rho(x)^{\beta}- \tilde{h}_3- \tilde{h}_4\rho(x).
    \end{align}
Finally, to show \eqref{assmp: inverse poly Lipschitz b}, for $x,y\in D$ one has by \eqref{eq: K inverse poly Lipschitz} and the Lipschitz assumption on $Q$ that
    \begin{align}
   &\lvert b(x)-b(y)\rvert^2=4\lvert K(x^2-x^1)- K(y^2-y^1)\rvert^2+2\sum_{i=1,2} \lvert Q(x^i)-Q(y^i)\lvert^2\nonumber \\ 
   &\leq c(1+\rho(x)^{-l}+\rho(y)^{-l})^2\lvert x-y\rvert^2.
    \end{align}
\end{proof}

\appendix
\section{Appendix: Local It\^{o}'s Formula}
In this section we prove a local It\^{o}'s formula, which shall be crucial for the proofs of Lemmas \ref{lemma: strong cont moment bounds} and \ref{lemma: strong scheme inverse moment bounds}. A related Itô's formula was proven in \cite{GOBET2000167} for functions $f\in C^2(\bar{D})\cap C^0(\mathbb{R}^d)$ of processes that can escape the region $D$, which therefore necessitated extra terms to account for the behaviour on the boundary. In our case we consider a case in which the `regular' Itô's formula is recovered.
\begin{prop}\label{prop: local Ito}
    Let the assumptions given in Section \ref{sec: assump and main thm} hold, and let $(Y_t)_{t\geq 0}$ be a process satisfying
    \begin{equation}
        Y_t=\int^t_0 g_s ds+W_t,\;\;\; Y_0=y_0\in D,
    \end{equation}
    where $(W_t)_{t\geq 0}$ is a Wiener martingale defined on a filtered probability space $(\Omega, P, \mathcal{F}, (\mathcal{F}_t)_{t\geq 0})$, and where $g_t$ is an $\mathcal{F}_t$-measurable processes satisfying  
    \begin{equation}
\int^t_0 g_s ds<\infty,
    \end{equation}
    almost surely. Recall the definition of $K_\epsilon$ in \eqref{eq: K epsilon defn}, and suppose $\tau$ is a stopping time such that $Y_s\in K_\epsilon$ almost surely for $s\geq 0$ such that $0\leq s \leq \tau$ holds. Suppose $f\in C^2(D)$. Then one has 
    \begin{equation}
        f(Y_\tau)= f(y_0)+\int^\tau_0 (\langle \nabla f(Y_s), Y_s\rangle +\frac{1}{2}\Delta f(Y_s))ds+\int^\tau_0 \langle \nabla f(Y_s), dW_s\rangle.
    \end{equation}
\end{prop}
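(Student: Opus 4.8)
The plan is to recover the classical It\^o formula by localising so that $Y$ stays in a fixed compact subset of $D$, and on each such piece replacing $f$ by a globally $C^2$ function that agrees with it there. For $R>\lvert y_0\rvert$ set $\sigma_R:=\inf\{t\geq 0\,\vert\;\lvert Y_t\rvert\geq R\}$; since $Y$ is continuous one has $\sigma_R\uparrow\infty$ almost surely. On the stochastic interval $[0,\tau\wedge\sigma_R]$ the hypothesis $Y_s\in K_\epsilon$ for $s\le\tau$, together with continuity, forces $Y$ to take values in $C_R:=\{x\in D\,\vert\;\rho(x)\geq\epsilon\}\cap \bar B_R$.

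The first genuine step is to check that $C_R$ is a \emph{compact subset of $D$}. It is plainly closed and bounded in $\mathbb{R}^d$; and if $x$ is a limit point of $C_R$ then $dist(x,S)\geq\epsilon>0$, so $x\notin S$ and in particular $x\notin\partial D$ (using $\partial D\subset S$), while $x$ being a limit of points of $D$ rules out $x$ lying in the exterior of the open set $D$, so $x\in D$. This is exactly where the assumptions $D\cap S=\emptyset$ and $\partial D\subset S$ are used. Since $C_R$ is a compact subset of the open set $D$ on which $f\in C^2$, I would pick $\varphi\in C_c^\infty(D)$ with $\varphi\equiv 1$ on an open neighbourhood of $C_R$ and set $f_R:=\varphi f$ on $D$ and $f_R:=0$ elsewhere; then $f_R\in C_c^2(\mathbb{R}^d)$, so $f_R,\nabla f_R,\nabla^2 f_R$ are bounded, and $f_R$ coincides with $f$ together with its first two derivatives on $C_R$.

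Now $Y_t=y_0+\int_0^t g_s\,ds+W_t$ is a continuous semimartingale, the drift part having finite variation by the hypothesis $\int_0^t\lvert g_s\rvert\,ds<\infty$, so the classical It\^o formula applies to $f_R(Y_{\cdot\wedge\tau\wedge\sigma_R})$. Since $Y_s\in C_R$ for $s\leq\tau\wedge\sigma_R$ one may replace $f_R$ and its derivatives by those of $f$, and $f_R(y_0)$ by $f(y_0)$, to get
\begin{align*}
f(Y_{t\wedge\tau\wedge\sigma_R})&=f(y_0)+\int_0^{t\wedge\tau\wedge\sigma_R}\Big(\langle\nabla f(Y_s),g_s\rangle+\tfrac12\Delta f(Y_s)\Big)\,ds\\
&\quad+\int_0^{t\wedge\tau\wedge\sigma_R}\langle\nabla f(Y_s),dW_s\rangle.
\end{align*}

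Finally I would let $R\to\infty$. Almost surely $\sup_{s\leq t\wedge\tau}\lvert Y_s\rvert<\infty$, so $(Y_s)_{s\leq t\wedge\tau}$ lies in a random compact subset of $D$; hence $\nabla f(Y_\cdot)$ and $\Delta f(Y_\cdot)$ are bounded on $[0,t\wedge\tau]$, which with $\int_0^{t\wedge\tau}\lvert g_s\rvert\,ds<\infty$ makes the Lebesgue integral absolutely convergent and also gives $\int_0^{t\wedge\tau}\lvert\nabla f(Y_s)\rvert^2\,ds<\infty$, so $\int_0^{\cdot\wedge\tau}\langle\nabla f(Y_s),dW_s\rangle$ is a well-defined continuous local martingale. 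Dominated convergence handles the $ds$-integral, continuity of the stochastic-integral process in its upper limit (as $\tau\wedge\sigma_R\uparrow\tau$) handles the martingale term, and continuity of $f$ and $Y$ handles the left-hand side, yielding the formula with $\tau$ replaced by $t\wedge\tau$; letting $t\to\infty$ where needed (the applications only use $\tau$ of the form $t\wedge\tau_\epsilon$ with $t$ finite) gives the stated identity. The only non-routine point is the compactness/extension argument; everything else is standard localisation.
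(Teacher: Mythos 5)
Your proof is correct, and at its core it is the same argument as the paper's: replace $f$ by a globally $C^2$ function that agrees with $f$ to second order on the region where the stopped process lives, then invoke the classical It\^{o} formula. The implementation differs in one respect: the paper builds a single global cutoff $\phi=\psi*1_{K_{\epsilon/2}}$ (mollifying the indicator and using $K_{3\epsilon/4}+B_{\epsilon/16}\subset K_{\epsilon/2}$ together with the complementary inclusion), so that $\phi\equiv 1$ on all of $K_{3\epsilon/4}$; this makes no compactness demand on $K_\epsilon$ and needs no limiting argument, whereas you first localize with $\sigma_R$ so the process lives in the compact set $C_R\subset D$, take an off-the-shelf bump $\varphi\in C^\infty_c(D)$ equal to $1$ near $C_R$, and then remove the localization by letting $R\to\infty$. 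What your route buys is that the only bespoke step is the elementary check that $C_R$ is a compact subset of $D$ — and your use of $\partial D\subset S$ and $D\cap S=\emptyset$ there is exactly right, being the same point the paper needs, implicitly, to know that $f\phi$ extended by zero is $C^2(\mathbb{R}^d)$ — at the cost of the extra $\sigma_R$, $R\to\infty$ layer, which the paper's collar construction avoids. Two cosmetic remarks: your displayed formula with $\langle\nabla f(Y_s),g_s\rangle$ is the intended one (the proposition's display writes $Y_s$ where $g_s$ is meant, as its use in the paper confirms), and, like the paper, you implicitly take $\tau$ finite, which is the case in every application ($\tau$ of the form $t\wedge\tau_\epsilon$ or $t\wedge\tau^n$) and which you flag explicitly.
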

\begin{proof}
 We claim there exists a function $\phi\in C^\infty(\mathbb{R}^d)$ such that $\phi(x)=1$ for $x\in K_{3\epsilon/4}$ and $\phi(x)=0$ for $x\in \mathbb{R}^d\setminus K_{\epsilon/4}$. First let us show that 
 \begin{equation}\label{eq: sum identity 1}
     K_{3\epsilon/4}+B_{\epsilon/16}\subset K_{\epsilon/2}.
 \end{equation}
 To this end, let $x \in K_{3\epsilon/4}$, $y\in B_{\epsilon/16}$. Then for every $z\in S$ one has 
 \begin{equation}
     \lvert x+y-z\rvert \geq    \lvert x-z\rvert-\epsilon/16\geq 3\epsilon/4-\epsilon/16>\epsilon/2.
 \end{equation}
 Similarly one may show
 \begin{equation}\label{eq: sum identity 2}
     (\mathbb{R}^d\setminus K_{\epsilon/4}+B_{\epsilon/16})\cap K_{\epsilon/2}=\emptyset ,
 \end{equation}
since for every $x\in \mathbb{R}^d\setminus K_{\epsilon/4}$, $y\in B_{\epsilon/16}$ and $z\in S$ one has
 \begin{equation}
     \lvert x+y-z\rvert \leq    \lvert x-z\rvert+\epsilon/16<\epsilon/2.
 \end{equation}
 Now note furthermore that exists a function $\psi\in C^\infty_c(\mathbb{R}^d)$ such that $\int \psi(x)dx=1$ and $supp\; \psi \subset B_{\epsilon/16}$, see Section C.4 in \cite{evans1998partial}. Furthermore, by Theorem 6, C.4 in \cite{evans1998partial} one has that $\psi*1_{K_{\epsilon/2}}\in C^\infty(\mathbb{R}^d)$. Combining this with \eqref{eq: sum identity 1} and \eqref{eq: sum identity 2}, and noting that every $y\in K_{3\epsilon/4}+B_{\epsilon/16}$ satisfies $y\in B_{\epsilon/16}(x)$ for some $x\in K_{3\epsilon/4}$, one has for every $x\in K_{3\epsilon/4}$ that
 \begin{equation}
  \psi*1_{K_{\epsilon/2}}(x)=\int_{\mathbb{R}^d}   1_{K_{\epsilon/2}}(y)\psi(x-y)dy =\int_{B_{\epsilon/16}(x)}   1_{K_{\epsilon/2}}(y)\psi(x-y)dy=1.
 \end{equation}
Similarly, for every $x\in \mathbb{R}^d\setminus K_{\epsilon/4}$ one has $\psi*1_{K_{\epsilon/2}}(x)=0$. Therefore one may set $\phi=\psi*1_{K_{\epsilon/2}}$, and a result one has that $f \phi \in C^2(\mathbb{R}^d)$. Furthermore, since for every $x\in K_\epsilon$ one has that $\phi=1$ on a neighbourhood of $x$, it holds that for all multi-indices $\alpha$ of order $2$ one has $\partial ^\alpha (f \phi) = \partial ^\alpha f$ on $K_\epsilon$. Therefore, one may apply the classical It\^{o}'s formula to obtain an expression for  $f(Y_\tau)\phi(Y_\tau)$, at which point the result follows. 
\end{proof}

\subsection*{Acknowledgements}
Thank you to István Gyöngy, Jonathan C. Mattingly and Emmanuel Gobet for fruitful discussion, and for the suggestion of several related works in the literature.
\bibliographystyle{plain}
\bibliography{ref}
\end{document}